\documentclass[12pt,a4paper]{amsart}
\usepackage{amsfonts}
\usepackage{amsthm}
\usepackage{amsmath, amscd, amssymb}
\usepackage[latin2]{inputenc}
\usepackage{t1enc}
\usepackage[mathscr]{eucal}
\usepackage{indentfirst}
\usepackage{graphicx}
\usepackage{graphics}
\usepackage{hyperref}
\usepackage{pict2e}
\usepackage{epic}
\numberwithin{equation}{section}
\usepackage[margin=2.9cm]{geometry}
\usepackage{epstopdf}

\DeclareMathOperator{\reg}{reg}

\DeclareMathOperator{\lex}{lex}
\DeclareMathOperator{\pol}{pol}

\theoremstyle{plain}
\newtheorem{theorem}{Theorem}[section]
\newtheorem{lemma}[theorem]{Lemma}
\newtheorem{cor}[theorem]{Corollary}
\newtheorem{proposition}[theorem]{Proposition}

\theoremstyle{definition}
\newtheorem{definition}[theorem]{Definition}
\newtheorem{remark}[theorem]{Remark}

\begin{document}

\title{Powers of ideals associated to $(C_4, 2K_2)$-free graphs}

\author[N. Erey]{Nursel Erey}

\address{Gebze Technical University \\  Department of Mathematics, Gebze, Kocaeli, 41400, Turkey } 

\email{nurselerey@gtu.edu.tr}

 \subjclass[2010]{05E40, 13D02}

 \keywords{edge ideals, vertex cover ideals, powers of ideals, gap-free graphs, $2K_2$-free graphs, Castelnuovo-Mumford regularity, linear resolution, componentwise linear}

\begin{abstract} 
Let $G$ be a $(C_4, 2K_2)$-free graph with edge ideal $I(G)\subset \Bbbk[x_1,\dots , x_n]$. We show that $I(G)^s$ has linear resolution for every $s\geq 2$. Also, we show that every power of the vertex cover ideal of $G$ has linear quotients. As a result, we describe the Castelnuovo-Mumford regularity of powers of $I(G)^{\vee}$ in terms of the maximum degree of $G$.
\end{abstract}

\maketitle

\section{Introduction}

Let $G$ be a simple graph with the vertex set $V=\{x_1,\dots, x_n\}$ and let $S =\Bbbk[x_1,\dots, x_n]$ be the polynomial ring over a field $\Bbbk$.  The \emph{edge ideal} $I(G)$ of $G$ is the ideal generated by the monomials $x_ix_j$ where $\{x_i,x_j\}$ is an edge of $G$. A set of vertices $C$ is called a \emph{vertex cover} if every edge of $G$ contains a vertex of $C$. The \emph{vertex cover ideal} of $G$, denoted by $I(G)^{\vee}$, is generated by the squarefree monomials $x_{i_1}\dots x_{i_k}$ where $\{x_{i_1}, \dots , x_{i_k}\}$ is a vertex cover of $G$. The vertex cover ideal of $G$ is also known as \emph{Alexander dual} of $I(G)$. 

Minimal free resolutions of edge ideals and vertex cover ideals are extensively studied and recently some attention has been given to regularity of powers of these ideals. It is well-known \cite{cutkosky, kodiyalam} that for any graded ideal $I$ there exist non-negative integers $c, d, s_0$ such that $\reg(I^s)=ds+c$ for all $s\geq s_0$. Although the constant $d$ can be determined from the generators of the ideal, no explicit formulas are known for $c$ and $s_0$. When $I$ is an edge or cover ideal, the integers $c$ and $s_0$ were computed for some families of graphs \cite{ali, banerjee, selvi, erey, hang trung, herzog hibi zheng, jayanthan bipartite, jayanthan, moghimian et al}.

By Fr\"oberg's Theorem~\cite{froberg} the edge ideal $I(G)$ has a linear resolution if and only if $G$ is co-chordal, i.e., the complement graph $G^c$ is chordal. Herzog, Hibi and Zheng \cite{herzog hibi zheng} showed that if $G$ is a co-chordal graph, then all powers of $I(G)$ have linear resolutions. Recall that a graph is \emph{chordal} if it has no induced cycle of length $4$ or more. Francisco, H\`{a} and Van Tuyl showed that if some power of $I(G)$ has linear resolution, then the complement graph $G^c$ has no induced cycle of length $4$, i.e., $G$ is \emph{gap-free}. In this direction Peeva and Nevo \cite{peeva nevo} asked the following question regarding this wider family of graphs: If $G$ is a gap-free graph, then does $I(G)^s$ have a linear resolution for every $s\gg 0$? 

In Section \ref{sec:edge ideals} we answer the question above in the affirmative under the additional assumption that $G^c$ is gap-free. If both $G$ and $G^c$ are gap-free, then $G$ is also known as $(C_4, 2K_2)$-free. Our proof is based on a structural characterization of such graphs given in \cite{blazsik} and a method given in \cite{banerjee} to bound the regularity of powers of edge ideals.

A graded ideal $I$ is called \emph{componentwise linear} if for each $d$, the ideal generated by all degree $d$ elements of $I$ has a linear resolution. The notion of componentwise linear ideal generalizes that of ideal with linear resolution. For a graded ideal, a stronger property than being componentwise linear is having linear quotients:
\[ \begin{array}{rcc}
\text{linear quotients} & \implies & \text{componentwise linear} \\
 &  & \Uparrow \\
 \text{equigenerated ideal with linear quotients} & \implies & \text{linear resolution}
 \end{array}\] 

A graph $G$ is called (\emph{sequentially}) \emph{Cohen-Macaulay} if the quotient ring $S/I(G)$ is (sequentially) Cohen-Macaulay over every field $\Bbbk$. Due to a result of Eagon and Reiner \cite{eagon reiner} it is known that a graph is Cohen-Macaulay if and only if its vertex cover ideal has a linear resolution. More generally, it was proved in \cite{herzog hibi componentwise} that a graph is sequentially Cohen-Macaulay if and only if its vertex cover ideal is componentwise linear. 

In general, powers of an ideal with linear resolution may not have linear resolutions. Sturmfels~\cite{sturmfels} gave an example of an ideal which has linear quotients but the second power of the ideal has no linear resolution. In particular, the ideal $I= (def, cef, cdf, cde, bef, bcd, acf, ade)$ has linear quotients but $\reg(I^2)=7$ while $I^2$ is generated in degree $6$. On the other hand, one may ask the following question: Given a (sequentially) Cohen-Macaulay graph, what can be said about the powers of its vertex cover ideal? Francisco and Van Tuyl \cite{francisco van tuyl} proved that the vertex cover ideal of a chordal graph is componentwise linear. Herzog, Hibi and Ohsugi \cite{herzog hibi ohsugi} showed that powers of the vertex cover ideal of a Cohen-Macaulay chordal graph have linear resolutions. Furthermore, they conjectured that all powers of the vertex cover ideal of a chordal graph are componentwise linear. It is also known that all powers of vertex cover ideals of Cohen-Macaulay families of bipartite \cite{herzog hibi depth of powers, mohammadi moradi} and cactus \cite{mohammadi cactus} graphs have linear resolutions.

In Section \ref{sec: vertex cover ideals} we show that all powers of the vertex cover ideal of a $(C_4, 2K_2)$-free graph are componentwise linear. Our proof is based on showing that such ideals have linear quotients. Since the regularity of a componentwise linear ideal can be determined from its generators, we obtain a formula for the regularity of powers of the vertex cover ideal in terms of the maximum degree of the graph.


\section{Powers of Edge Ideals of $(C_4, 2K_2)$-free graphs}\label{sec:edge ideals}
\subsection{Background on Graph Theory}
Throughout this paper $G$ will denote a finite simple graph. We write $G=(V,E)$ where $V$ and $E$ are respectively the sets of vertices and edges of the graph. We say a vertex $v$ is a \emph{neighbor} of $u$ if $\{u,v\}\in E$. In this case, we simply write $uv\in G$ and say $u$ and $v$ are \emph{adjacent}. The neighbor set of $v$, denoted by $N(v)$, consists of all vertices of $G$ that are adjacent to $v$. A vertex is \emph{isolated} if it has no neighbors. The \emph{complement} graph $G^c$ has the same vertices as $G$ and $uv\in G^c$ if $uv\notin G$. We say $G$ is \emph{connected} if there is a path between every pair of vertices.

A graph $H$ is called a \emph{subgraph} of $G$ if the vertex and edge sets of $H$ are contained respectively in those of $G$. A subgraph $H$ of $G$ is called an \emph{induced subgraph} if $uv\in G$ implies $uv\in H$ for all vertices $u$ and $v$ of $H$. We say $G$ is \emph{$H$-free} if $G$ has no induced subgraph isomorphic to $H$. If $A$ is a set of vertices of $G$, then $G-A$ denotes the induced subgraph which is obtained from $G$ by removing the vertices in $A$. We say $A$ is an \emph{independent} set if no two vertices of $A$ are adjacent in $G$. A \emph{complete graph} (or \emph{clique}) is a graph such that every pair of vertices are adjacent. A complete graph on $n$ vertices is denoted by $K_n$. A \emph{cycle} graph with vertices $v_1,\dots ,v_n$ and edges $v_1v_2, \dots , v_{n-1}v_n, v_nv_1$ is denoted by $C_n=(v_1v_2\dots v_n)$. A graph is \emph{chordal} if it has no induced cycle of length $4$ or more. The complement graph of $C_4$ is denoted by $2K_2$. A $2K_2$-free graph is also known as \emph{gap-free}. Notice that a gap-free graph is connected if and only if it has no isolated vertices. We will use the following theorem which describes graphs that are both $C_4$-free and $2K_2$-free. We will say such graphs are $(C_4,2K_2)$-free.

\begin{theorem}\cite[Theorem 1.1]{blazsik}\label{thm: gap free C4 free}
A graph $G=( V, E)$ is $(C_4, 2K_2)$-free if and only if there is a partition $V_1 \cup V_2 \cup V_3 =V$ with the following properties: 
\begin{itemize}
\item[(i)] $V_1$ is an independent set in $G$.
\item[(ii)] $V_2$ is the vertex set of a complete subgraph in $G$.
\item[(iii)] $V_3=\emptyset$ or $|V_3|=5$, and in the latter case $V_3$ induces a $5$-cycle in $G$.
\item[(iv)]If $V_3\neq \emptyset$, then for all $v_i\in V_i$, $i=1,2,3$, $v_1v_3 \notin E$ and $v_2 v_3\in E$ hold.
\end{itemize}
\end{theorem}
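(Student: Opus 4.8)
The plan is to prove both implications, treating necessity as the substantive half and sufficiency as a routine verification. Throughout I write the five-cycle, when present, as $C_5=(t_1t_2t_3t_4t_5)$.

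\emph{Sufficiency.} Suppose such a partition exists. Since $V_1$ is independent and $V_2$ is a clique, the induced subgraph $G-V_3=G[V_1\cup V_2]$ is a split graph, and split graphs are $(C_4,2K_2)$-free; so any forbidden induced subgraph must meet $V_3$. For $2K_2$, observe that every edge of $G$ is either internal to $V_3$ or incident to $V_2$, because $V_1$ is independent with no edges to $V_3$. Given two disjoint edges $e_1,e_2$: if both are incident to $V_2$, their $V_2$-endpoints coincide or are adjacent by (ii); if one is internal to $V_3$, its endpoints are joined to the $V_2$-endpoint of the other edge by (iv), or else both edges lie inside the $2K_2$-free cycle $C_5$. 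In every case $e_1$ and $e_2$ are joined by an edge, so no induced $2K_2$ meets $V_3$. For $C_4$, pick $t\in V_3$ lying on a putative induced four-cycle. As $t$ is anti-adjacent to $V_1$ and complete to $V_2$, its two cycle-neighbours lie in $V_2\cup V_3$ and its diagonal partner lies in $V_1\cup V_3$; running through the few possibilities and using that $V_2$ is a clique complete to $V_3$, that $V_1$ is anti-complete to $V_3$, and that no four vertices of a $C_5$ induce a $C_4$, each case forces a chord or a missing edge, a contradiction.

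\emph{Necessity.} Assume $G$ is $(C_4,2K_2)$-free. If $G$ has no induced $C_5$, then $G$ is $(2K_2,C_4,C_5)$-free, hence a split graph by the F\"oldes--Hammer characterisation; writing $V=V_1\cup V_2$ with $V_1$ independent and $V_2$ a clique, and taking $V_3=\emptyset$, gives the partition with (iii) and (iv) vacuous. So assume $G$ contains an induced five-cycle, which we fix as $V_3=\{t_1,\dots,t_5\}$ with $C_5=(t_1t_2t_3t_4t_5)$. The crux is the claim that every $v\in V\setminus V_3$ is adjacent to all of $V_3$ or to none of it, which I would establish by excluding $|N(v)\cap V_3|\in\{1,2,3,4\}$ through explicit witnesses. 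If $N(v)\cap V_3=\{t_1\}$, then $\{vt_1,t_3t_4\}$ is an induced $2K_2$; if $N(v)\cap V_3=\{t_1,t_3\}$, then $(v\,t_1\,t_2\,t_3)$ is an induced $C_4$; if $N(v)\cap V_3=\{t_1,t_2\}$, then $\{vt_1,t_3t_4\}$ is an induced $2K_2$. The three- and four-element patterns reduce to these up to the symmetry of $C_5$: $\{t_1,t_2,t_3\}$ yields the $2K_2$ $\{vt_2,t_4t_5\}$, the pattern $\{t_1,t_2,t_4\}$ yields the $C_4$ $(v\,t_2\,t_3\,t_4)$, and $\{t_1,t_2,t_3,t_4\}$ yields the $C_4$ $(v\,t_1\,t_5\,t_4)$.

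With the claim in hand, let $V_2$ be the set of vertices outside $V_3$ adjacent to all of $V_3$, and $V_1$ the set of those adjacent to none; the claim gives $V=V_1\cup V_2\cup V_3$, and (iv) holds by construction. If $a,a'\in V_1$ were adjacent, then $\{aa',t_1t_2\}$ would be an induced $2K_2$, so $V_1$ is independent, giving (i). If $c,c'\in V_2$ were non-adjacent, then $(c\,t_1\,c'\,t_3)$ would be an induced $C_4$, so $V_2$ is a clique, giving (ii); and (iii) holds since $V_3$ induces $C_5$. The main obstacle is exactly the all-or-nothing adjacency claim in the necessity direction: it requires checking every nonempty proper neighbourhood pattern of an external vertex on the five-cycle and exhibiting a forbidden $C_4$ or $2K_2$ in each, whereas sufficiency and the $C_5$-free subcase are a direct verification and an appeal to the split-graph characterisation.
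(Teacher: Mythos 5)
This theorem is quoted from Bl\'azsik--Hujter--Pluh\'ar--Tuza \cite{blazsik}; the paper under review states it without proof, so there is no internal argument to compare yours against, and your proposal must be judged on its own terms. On those terms it is correct. The necessity direction, which you rightly identify as the substantive half, is handled properly: your all-or-nothing adjacency claim is verified on a genuinely complete list of neighbourhood patterns of an external vertex on the fixed $C_5$ (a single vertex; an adjacent and a non-adjacent pair; the two $3$-vertex types, where independent triples are impossible since $\alpha(C_5)=2$; and the single $4$-vertex type up to rotation), and each witness you exhibit is indeed an induced $C_4$ or $2K_2$ --- I checked all of them. The construction of $V_1$ and $V_2$ and the verification of (i)--(iv) then go through exactly as you say. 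Two remarks on self-containedness. First, the $C_5$-free case rests on the F\"oldes--Hammer theorem that $(2K_2,C_4,C_5)$-free graphs are split; this is a classical result and its use is legitimate, but it is the one ingredient you import rather than prove, and a fully self-contained proof would need it (or a direct argument replacing it). Second, your sufficiency direction is terse: the $2K_2$ analysis is complete as written, but the $C_4$ case (``running through the few possibilities \dots each case forces a chord or a missing edge'') delegates the actual check to the reader; the cases are few --- both cycle-neighbours of $t$ in $V_2$ forces the chord from (ii), one in $V_2$ and one in $V_3$ forces a chord from (iv) or strands the diagonal vertex in $V_1$ against (iv), both in $V_3$ reduces to the $C_4$-freeness of $C_5$ --- and all succeed, but in a submitted proof you should write them out.
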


A graph is called a \emph{split} graph if its vertex set can be partitioned into an independent set and a clique. If $H$ is a split graph, then both $H$ and $H^c$ are chordal. Observe that in Theorem~\ref{thm: gap free C4 free} the induced subgraph of $G$ on $V_1\cup V_2$ is a split graph. 

\begin{remark}\label{rk: no cycle of length greater than 5}
Observe that any cycle of length at least $6$ contains $2K_2$ as an induced subgraph. Therefore, a $2K_2$-free graph has no induced $C_n$ where $n\geq 6$. Thus a $(C_4,2K_2)$-free graph has no induced $C_n$ and $C_n^c$ for $n\geq 6$.
\end{remark}

\subsection{Bounding the Regularity}
The \emph{(Castelnuovo-Mumford) regularity} of a monomial ideal $I\subset S=\Bbbk[x_1,\dots ,x_n]$ is given by
$$\reg(I)=\max\{j-i : b_{i,j}(I)\neq 0\}   $$
where $b_{i,j}(I)$ are the \emph{graded Betti numbers} of $I$. An ideal $I$ generated in degree $d$ is said to have a \emph{linear resolution} if for all $i\geq 0$ $b_{i,j}(I)=0$ for all $j\neq i+d$.

If $G$ is a graph on the vertices $x_1,\dots , x_n$, then the \emph{edge ideal} of $G$ is defined as
$$I(G)= (xy: xy \text{ is an edge of } G). $$

The method of polarization reduces the study of minimal free resolutions of monomial ideals to that of square-free monomial ideals. Therefore quadratic monomial ideals can be studied via edge ideals. The following classical result of Fr\"oberg \cite{froberg} gives a combinatorial characterization of edge ideals which have linear resolutions.

\begin{theorem}\cite[Theorem~1]{froberg}\label{thm: frobergs theorem} The minimal free resolution of $I(G)$ is linear (i.e., $\reg(I(G)=2)$ if and only if the complement graph $G^c$ is chordal.
\end{theorem}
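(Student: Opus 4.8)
The plan is to translate the statement into simplicial homology via Hochster's formula and then reduce the chordality condition to a vanishing statement for the reduced homology of clique complexes. First I would observe that $I(G)$ is the Stanley--Reisner ideal of the independence complex $\Delta=\mathrm{Ind}(G)$, whose faces are the independent sets of $G$; indeed the minimal nonfaces of $\mathrm{Ind}(G)$ are exactly the edges of $G$ (every singleton is independent, so no nonface of size $2$ is redundant and every larger nonface contains an edge), whence $I_\Delta=I(G)$. Since a set of vertices is independent in $G$ precisely when it spans a clique in $G^c$, the complex $\Delta$ is the clique (flag) complex of $H:=G^c$, and every restriction $\Delta_W$ to a vertex subset $W$ is the clique complex of the induced subgraph $H[W]$.

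Next I would invoke Hochster's formula, which gives
$$\beta_{p,j}(S/I_\Delta)=\sum_{|W|=j}\dim_{\Bbbk}\widetilde{H}_{j-p-1}(\Delta_W;\Bbbk).$$
Because $I(G)$ is generated in degree $2$, it has a linear resolution exactly when $\beta_{p,j}(S/I(G))=0$ for all $j\neq p+1$. As each singleton is a face of $\Delta$, for $W\neq\emptyset$ the restriction $\Delta_W$ is never the irrelevant complex, so the only reduced homology that can appear below degree $1$ sits in degree $0$ and contributes to the linear strand $j=p+1$. Hence $\reg(I(G))=2$ if and only if $\widetilde{H}_t(\Delta_W;\Bbbk)=0$ for every $W\subseteq V$ and every $t\geq 1$. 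The problem thus becomes purely topological: I must show that the clique complex of $H=G^c$ and all of its induced subcomplexes have vanishing reduced homology above degree $0$ if and only if $H$ is chordal.

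For the forward implication I would argue by contraposition. If $H$ is not chordal it contains an induced cycle $C_k$ with $k\geq 4$; such a cycle is chordless and therefore triangle-free, so the clique complex of $H[V(C_k)]$ is the $1$-dimensional cycle itself, homotopy equivalent to $S^1$, with $\widetilde{H}_1\neq 0$. For the converse, since induced subgraphs of chordal graphs are again chordal, it suffices to prove $\widetilde{H}_t(\mathrm{Cl}(H);\Bbbk)=0$ for $t\geq 1$ whenever $H$ is chordal. Here I would induct on the number of vertices using a simplicial vertex $v$, which exists in every chordal graph and whose closed neighborhood $N[v]$ is a clique: writing $\mathrm{Cl}(H)=\mathrm{star}(v)\cup\mathrm{Cl}(H-v)$, the star is a cone (hence contractible) and the intersection is the simplex on $N(v)$ (also contractible), so the Mayer--Vietoris sequence forces $\widetilde{H}_t(\mathrm{Cl}(H))\cong\widetilde{H}_t(\mathrm{Cl}(H-v))$ for all $t\geq 1$, and the inductive hypothesis finishes the argument.

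The main obstacle I anticipate is the inductive step of the converse: making the Mayer--Vietoris reduction clean requires handling the degenerate cases carefully (an isolated simplicial vertex, where the intersection of the two pieces is empty, and the boundary map in low degree $t=1$), and it rests on the two structural inputs that chordal graphs always admit a simplicial vertex and are closed under taking induced subgraphs. By contrast, the forward direction and the passage from the algebraic statement to clique complexes via Hochster's formula are comparatively routine once the dictionary $\mathrm{Ind}(G)=\mathrm{Cl}(G^c)$ is in place.
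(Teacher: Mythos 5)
Your proposal is correct, but there is nothing in the paper to compare it against: the paper quotes Fr\"oberg's theorem from \cite{froberg} as a known black box and never proves it, since the result is classical background for Sections 2 and 3. Your argument is essentially the standard modern proof. The dictionary $I(G)=I_{\Delta}$ with $\Delta=\mathrm{Ind}(G)=\mathrm{Cl}(G^c)$ is right (the minimal nonfaces of the independence complex are exactly the edges), Hochster's formula correctly converts $\reg(I(G))=2$ into the vanishing of $\widetilde{H}_t(\Delta_W;\Bbbk)$ for all $W$ and all $t\geq 1$ (the possible $t=-1$ contribution is killed because $\Delta_W$ contains all singletons), the chordless-cycle computation gives the forward direction, and the Dirac simplicial-vertex plus Mayer--Vietoris induction gives the converse: the closed star of a simplicial vertex $v$ is the full simplex on $N[v]$, its intersection with $\mathrm{Cl}(H-v)$ is the simplex on $N(v)$, and the degenerate cases you flag (isolated $v$, where the intersection is the empty complex, and the $t=1$ boundary) indeed do not disturb the isomorphism $\widetilde{H}_t(\mathrm{Cl}(H))\cong\widetilde{H}_t(\mathrm{Cl}(H-v))$ for $t\geq 1$. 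For comparison with the literature: Fr\"oberg's original argument, and the proof in \cite{herzog hibi monomial ideals}, instead use a perfect elimination order of $G^c$ to show the stronger statement that $I(G)$ has \emph{linear quotients} when $G^c$ is chordal; that route buys a combinatorially explicit, field-free certificate of linearity (and is closer in spirit to what the paper does for cover ideals in Section 3), while your homological route is shorter, works uniformly over any fixed field, and isolates exactly the topological content (vanishing of higher reduced homology of all induced clique subcomplexes) that makes the equivalence work.
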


We recall the following well-known results.

\begin{theorem}\cite[Corollary 1.6.3]{herzog hibi monomial ideals}\label{thm: polarization}
	Let $I\subset S$ be a monomial ideal and let $I^{\pol}$ be its polarization. Then $\reg(I)=\reg(I^{\pol})$.
\end{theorem}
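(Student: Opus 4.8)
The plan is to exhibit the original quotient $S/I$ as the reduction of the polarized quotient $T/I^{\pol}$ modulo a regular sequence of linear forms, and then to use the principle that reducing a finitely generated graded module modulo a regular linear form leaves all of its graded Betti numbers unchanged. Since the regularity of a graded ideal is read off from its graded Betti numbers via $\reg(I)=\max\{j-i : b_{i,j}(I)\neq 0\}$, once I know that $b_{i,j}(I)=b_{i,j}(I^{\pol})$ for all $i,j$ the desired equality $\reg(I)=\reg(I^{\pol})$ follows at once. So the whole problem reduces to proving that polarization preserves graded Betti numbers.

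The homological engine I would isolate first is the following standard lemma: if $R$ is a standard graded $\Bbbk$-algebra, $M$ a finitely generated graded $R$-module, and $\theta\in R_1$ a nonzerodivisor on $M$, then tensoring a minimal graded free resolution of $M$ over $R$ with $R/\theta R$ produces a minimal graded free resolution of $M/\theta M$ over $R/\theta R$, so that $b_{i,j}^{R}(M)=b_{i,j}^{R/\theta R}(M/\theta M)$ for all $i,j$. Exactness after tensoring holds because $\theta$ being $M$-regular forces $\Tor_i^{R}(M,R/\theta R)=0$ for $i>0$, as one sees from the length-one Koszul complex $0\to R(-1)\xrightarrow{\theta}R\to R/\theta R\to 0$; minimality survives because the entries of the differentials stay inside the graded maximal ideal; and the internal degrees are unshifted since $\theta$ has degree $1$. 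I would then build the polarization up one variable-slot at a time: given a monomial ideal $J$ in which some variable $x$ divides a generator to a power $\geq 2$, introduce a fresh variable $y$ and form the partial polarization $\widetilde J$ by peeling one factor of $x$ into $y$ in every generator divisible by $x$, i.e.\ replacing each generator $x^{b}m$ (with $x\nmid m$, $b\geq 1$) by $x^{b-1}ym$ and leaving generators not divisible by $x$ unchanged. Setting $y=x$ recovers $J$, so $\widetilde J$ specializes back to $J$ under the linear form $x-y$, and after finitely many such steps (the quantity $\sum_v(\text{top power of }v-1)$ strictly drops) one reaches the squarefree ideal $I^{\pol}$.

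The crux, and what I expect to be the main obstacle, is verifying that at each step the single linear form $x-y$ is a nonzerodivisor modulo $\widetilde J$. Here I would use that for a monomial ideal all associated primes are monomial primes, and that a monomial prime $P$ contains the difference $x-y$ if and only if it contains both $x$ and $y$; hence $x-y$ is a nonzerodivisor precisely when no associated prime of $\widetilde J$ contains both $x$ and $y$. To establish this, suppose for contradiction that $P=(\widetilde J : w)$ is a prime containing $x$ and $y$ with $w\notin \widetilde J$. From $yw\in\widetilde J$ the dividing generator must itself involve $y$ (an unchanged generator, being coprime to $y$, would divide $w$ and already force $w\in\widetilde J$), so it is a peeled generator $x^{b-1}ym$ with $x^{b-1}m\mid w$. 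Analyzing $xw\in\widetilde J$ by the same dichotomy on the dividing generator then forces either $y\mid w$ or a generator dividing $w$ outright, and in each case one concludes $w\in\widetilde J$, a contradiction. This squarefree-and-in-tandem behaviour of the fresh variable $y$ is exactly what makes the difference regular, and it is the combinatorial heart of the argument.

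Finally I would assemble the pieces. The composite of the single steps realizes $S/I$ as $T/(I^{\pol}+(\Theta))$, where $\Theta$ is the collection of differences, and the lemma applied once per element of $\Theta$ (each a nonzerodivisor on the successive quotient by the previous paragraph) transports the graded Betti numbers of $T/I^{\pol}$ down to those of $S/I$ without change; equivalently the resolution of $I^{\pol}$ reduces to that of $I$. Therefore $b_{i,j}(I)=b_{i,j}(I^{\pol})$ for all $i,j$, and taking the maximum of $j-i$ over the nonvanishing Betti numbers gives $\reg(I)=\reg(I^{\pol})$.
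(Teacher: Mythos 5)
The paper does not actually prove this statement---it is quoted without proof from Herzog--Hibi \cite[Corollary 1.6.3]{herzog hibi monomial ideals}, and your argument is essentially the standard proof behind that citation: realize $S/I$ as $T/I^{\pol}$ modulo a regular sequence of variable differences, built up one partial polarization at a time, and invoke the invariance of graded Betti numbers under reduction by a linear nonzerodivisor. Your proof is correct as written, including the key step (the associated-primes argument showing no monomial prime of the partially polarized ideal contains both $x$ and the fresh variable $y$, hence $x-y$ is regular), so it faithfully reconstructs the cited result.
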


\begin{lemma}\label{lem:regularity after adding variables} 
	If $I\subset S$ is a monomial ideal, then $\reg(I,x)\leq \reg(I)$ for any variable $x$.
\end{lemma}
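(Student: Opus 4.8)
The plan is to relate $(I,x)$ to $I$ by a short exact sequence and then reduce the whole statement to a single estimate involving the colon ideal $(I:x)=\{f\in S:\ xf\in I\}$. Since $x$ is a nonzerodivisor on $S$, multiplication by $x$ defines an injective degree-$1$ map $(S/(I:x))(-1)\hookrightarrow S/I$ whose cokernel is precisely $S/(I+(x))=S/(I,x)$. This gives the short exact sequence of finitely generated graded $S$-modules
$$ 0 \longrightarrow (S/(I:x))(-1) \xrightarrow{\ \cdot x\ } S/I \longrightarrow S/(I,x) \longrightarrow 0. $$

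Next I would apply the standard regularity inequality for a short exact sequence $0\to A\to B\to C\to 0$, namely $\reg(C)\leq \max\{\reg(B),\ \reg(A)-1\}$. Taking $A=(S/(I:x))(-1)$, $B=S/I$ and $C=S/(I,x)$, and using $\reg\big((S/(I:x))(-1)\big)=\reg(S/(I:x))+1$, the inequality becomes
$$ \reg(S/(I,x)) \leq \max\{\reg(S/I),\ \reg(S/(I:x))\}. $$
Converting quotients to ideals via the relation $\reg(J)=\reg(S/J)+1$, valid for every nonzero proper graded ideal $J$, this reads $\reg((I,x))\leq \max\{\reg(I),\ \reg((I:x))\}$. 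Thus the lemma follows once the auxiliary bound $\reg((I:x))\leq \reg(I)$ is established, after disposing of the trivial edge cases ($I=0$, $I=S$, or $x\in I$) by hand.

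The main obstacle is therefore this colon estimate, since the reduction above is purely formal. To prove $\reg((I:x))\leq \reg(I)$ for a monomial ideal $I$ and a variable $x$, I would reduce to the squarefree case: by Theorem~\ref{thm: polarization} polarization preserves regularity, so it suffices to treat a squarefree monomial (Stanley--Reisner) ideal, for which $(I:x)$ is again squarefree and corresponds to a link/deletion operation on the associated simplicial complex. The inequality can then be read off the combinatorial (Hochster/Alexander-dual) description of the graded Betti numbers. This colon bound is in fact a known property of monomial ideals and could alternatively be invoked directly; I expect the care needed in the squarefree combinatorics to be the only genuinely delicate point, whereas the passage from the colon bound to the statement of the lemma is immediate from the short exact sequence.
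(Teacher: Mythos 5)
The paper never proves this lemma: it appears in a block of results the author explicitly ``recalls'' as well known, so the only question is whether your argument would stand on its own as a complete proof. Your formal skeleton is correct. The sequence
\begin{equation*}
0 \longrightarrow \bigl(S/(I:x)\bigr)(-1) \xrightarrow{\ \cdot x\ } S/I \longrightarrow S/(I,x) \longrightarrow 0
\end{equation*}
is exact (though injectivity holds simply by the definition of $I:x$, not because $x$ is a nonzerodivisor on $S$), the estimate $\reg(C)\leq\max\{\reg(B),\reg(A)-1\}$ is standard, and together they do give $\reg(I,x)\leq\max\{\reg(I),\reg(I:x)\}$. So the lemma would indeed follow from the auxiliary bound $\reg(I:x)\leq\reg(I)$.

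The difficulty, however, is located in exactly the wrong place. After polarizing (note $(I,x)^{\pol}=(I^{\pol},x)$, since every generator of $I$ divisible by $x$ polarizes to a monomial divisible by the first $x$-variable), the ideal $(I,x)$ is the Stanley--Reisner ideal of the \emph{deletion} $\mathrm{del}_\Delta(x)$, whose induced subcomplexes are all induced subcomplexes of $\Delta$; Hochster's formula then gives $\reg(I,x)\leq\reg(I)$ in one step, with no exact sequence and no colon ideal. That is the honest short proof and presumably why the paper treats the statement as folklore. By contrast, $(I:x)$ is (up to a cone vertex) the Stanley--Reisner ideal of the \emph{link} $\mathrm{lk}_\Delta(x)$, and induced subcomplexes of the link are \emph{not} induced subcomplexes of $\Delta$, so the colon bound cannot be ``read off'' Hochster's formula as you claim: one needs an additional homological argument, e.g.\ Mayer--Vietoris for $\Delta|_{W\cup\{x\}}=\mathrm{del}\cup\mathrm{star}$ together with contractibility of the star, to conclude that $\tilde H_{j}\bigl(\mathrm{lk}_\Delta(x)|_W\bigr)\neq 0$ forces $\tilde H_{j+1}\bigl(\Delta|_{W\cup\{x\}}\bigr)\neq 0$ or $\tilde H_{j}\bigl(\Delta|_W\bigr)\neq 0$. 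So your reduction runs backwards: it derives the easy inequality (deletion) from the strictly harder one (link), and the justification you sketch for the hard one has a genuine gap precisely at the point you dismiss as routine. The argument is rescued only if you are permitted to cite $\reg(I:x)\leq\reg(I)$ as a known black box --- legitimate, but then you have replaced one folklore lemma by a harder one rather than proved anything.
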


\begin{lemma}\label{lem:well known}
	Let $I\subset S$ be a monomial ideal, and let $m$ be a monomial of degree $d$. Then
	$$\reg(I)\leq \max\{\reg(I:m)+d, \reg(I,m)\}. $$
	\end{lemma}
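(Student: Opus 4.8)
The plan is to deduce the inequality from a single short exact sequence together with the standard behaviour of Castelnuovo--Mumford regularity in short exact sequences. The key observation is that multiplication by $m$ relates the three quotient rings $S/(I:m)$, $S/I$ and $S/(I,m)$, and that the regularity of the middle term of a short exact sequence is bounded by the maximum of the regularities of the two outer terms.

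First I would set up the graded short exact sequence
$$0 \longrightarrow \bigl(S/(I:m)\bigr)(-d) \stackrel{\cdot m}{\longrightarrow} S/I \longrightarrow S/(I,m) \longrightarrow 0.$$
Here the map sends $\overline{f}$ to $\overline{mf}$. It is well defined and injective precisely because $(I:m)=\{f\in S : mf\in I\}$: an element $f$ maps to zero in $S/I$ if and only if $mf\in I$, that is $f\in(I:m)$. Its image is $(mS+I)/I$, so the cokernel is $S/(mS+I)=S/(I,m)$. Since $m$ has degree $d$, the shift $(-d)$ on the first term makes the map degree preserving; this is the only place where the degree of $m$ enters.

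Next I would apply the inequality $\reg(B)\leq \max\{\reg(A),\reg(C)\}$, valid for any short exact sequence $0\to A\to B\to C\to 0$ of finitely generated graded modules, which is a standard consequence of the associated long exact sequence in local cohomology (or of the long exact $\Tor$ sequence). With $A=\bigl(S/(I:m)\bigr)(-d)$, $B=S/I$ and $C=S/(I,m)$, and using $\reg\bigl(M(-d)\bigr)=\reg(M)+d$, this gives
$$\reg(S/I)\leq \max\{\reg(S/(I:m))+d,\ \reg(S/(I,m))\}.$$
Finally I would pass from quotient rings back to ideals via the standard relation $\reg(J)=\reg(S/J)+1$ for a nonzero proper graded ideal $J$, which holds because the minimal free resolution of $J$ is the shifted tail of that of $S/J$. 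Substituting $\reg(S/I)=\reg(I)-1$ and similarly for $I:m$ and $(I,m)$, the common $-1$ cancels on both sides and yields exactly $\reg(I)\leq \max\{\reg(I:m)+d,\ \reg(I,m)\}$.

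There is no genuine obstacle here; the only points requiring care are bookkeeping ones, namely getting the direction of the short exact sequence and the placement of the degree shift $(-d)$ correct, and keeping track of the $+1$ discrepancy between the regularity of an ideal and that of its quotient ring. Both are routine once the exact sequence has been written down.
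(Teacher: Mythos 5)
Your proof is correct. The paper does not actually prove this lemma---it is recalled as a well-known result---and your argument via the short exact sequence $0 \to \bigl(S/(I:m)\bigr)(-d) \xrightarrow{\cdot m} S/I \to S/(I,m) \to 0$, the bound $\reg(B)\leq \max\{\reg(A),\reg(C)\}$ from the long exact sequence in local cohomology (or $\Tor$), and the translation $\reg(J)=\reg(S/J)+1$ is exactly the standard derivation that the citation-free statement rests on. The only pedantic caveat is the degenerate case $m\in I$, where $(I:m)=S$ and the identity $\reg(J)=\reg(S/J)+1$ breaks down; but there $(I,m)=I$ and the claimed inequality is trivial, so your argument needs no repair in substance.
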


\begin{lemma}\cite[Lemma~3.1]{huneke}\label{lem: regularity bound with star x} Let $x$ be a vertex of $G$ with neighbours $y_1,\dots ,y_m$. Then
$$\displaystyle \reg(I(G))\leq \max\{\reg (I(G- \{N(x)\cup\{x\}\}))+1, \reg(I(G- \{x\})) \}. $$
Moreover, $\reg(I(G))$ is equal to one of these terms.
\end{lemma}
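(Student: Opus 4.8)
The plan is to reduce both assertions to the short exact sequence given by multiplication by the variable $x$ and to read everything off from the long exact sequence in $\Tor$. Write $N[x]=N(x)\cup\{x\}$ for the closed neighbourhood and set $I=I(G)$.

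First I would identify the two ideals appearing in Lemma~\ref{lem:well known} in graph-theoretic terms. Inspecting generators gives
$$I:x=(y_1,\dots,y_m)+I(G-\{N(x)\cup\{x\}\})\quad\text{and}\quad (I,x)=(x)+I(G-\{x\}),$$
where in the first identity every edge of $G-\{x\}$ meeting $N(x)$ is absorbed by the corresponding variable $y_i$, so that only the edges of $G-\{N(x)\cup\{x\}\}$ survive. In each case the adjoined variables do not occur in the accompanying edge ideal, and I claim $\reg(I:x)=\reg(I(G-\{N(x)\cup\{x\}\}))$ and $\reg((I,x))=\reg(I(G-\{x\}))$. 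This rests on the elementary fact that adjoining a fresh variable as a generator leaves regularity unchanged: if $y$ does not appear in a monomial ideal $J$, then $y$ is a nonzerodivisor modulo $J$ and $\reg(J+(y))=\reg(J)$ (Lemma~\ref{lem:regularity after adding variables} already gives $\reg(J,y)\le\reg(J)$, and equality holds since $y$ is new). Applying this $m$ times, respectively once, yields the two identities.

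The inequality is then immediate: Lemma~\ref{lem:well known} with the monomial $x$ of degree $1$ gives $\reg(I)\le\max\{\reg(I:x)+1,\ \reg((I,x))\}$, which by the identities above is exactly $\max\{\reg(I(G-\{N(x)\cup\{x\}\}))+1,\ \reg(I(G-\{x\}))\}$. For the \emph{moreover} statement I would pass to the short exact sequence
$$0\longrightarrow (S/(I:x))(-1)\xrightarrow{\cdot x} S/I\longrightarrow S/(I,x)\longrightarrow 0,$$
whose exactness is a direct computation of the kernel and cokernel of multiplication by $x$. Writing $A,B,C$ for its three terms, the grading shifts give $\reg A=\reg(I:x)$, $\reg B=\reg(I)-1$ and $\reg C=\reg((I,x))-1$, so the claim becomes $\reg B\in\{\reg A,\reg C\}$. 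The long exact sequence in $\Tor(-,\Bbbk)$ furnishes the two standard bounds $\reg B\le\max\{\reg A,\reg C\}$ and $\reg A\le\max\{\reg B,\reg C+1\}$, and the extra ingredient is $\reg C\le\reg B$, which is precisely Lemma~\ref{lem:regularity after adding variables} in the form $\reg(I,x)\le\reg(I)$. A short case analysis closes the argument: if $\reg A\le\reg C$, then $\reg B\le\reg C\le\reg B$ forces $\reg B=\reg C$; if $\reg A>\reg C$, then the first bound gives $\reg B\le\reg A$, and either $\reg B=\reg C$, or $\reg B>\reg C$ so that $\reg C+1\le\reg B$ and the second bound forces $\reg A\le\reg B$, whence $\reg B=\reg A$. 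Translating back, $\reg(I(G))$ equals $\reg(I(G-\{N(x)\cup\{x\}\}))+1$ or $\reg(I(G-\{x\}))$.

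The main obstacle is exactly this last equality. The inequality is a formal consequence of Lemma~\ref{lem:well known}, but promoting it to an equality requires combining the refined long-exact-sequence estimates with the added-variable bound $\reg(I,x)\le\reg(I)$ in order to eliminate the otherwise-possible degenerate configuration $\reg A=\reg C+1$ with $\reg B$ strictly smaller than both $\reg A$ and $\reg C$. A minor point to settle along the way is the convention for the regularity of the edge ideal of an edgeless graph, which is what $G-\{N(x)\cup\{x\}\}$ may become.
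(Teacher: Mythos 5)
Your proof is correct, but note that there is no in-paper argument to compare it against: the paper quotes this lemma verbatim from \cite{huneke} (Dao--Huneke--Schweig, Lemma~3.1) and never proves it. Your reconstruction is essentially the standard argument of that cited source: the identifications $I(G):x=(y_1,\dots,y_m)+I(G-\{N(x)\cup\{x\}\})$ and $(I(G),x)=(x)+I(G-\{x\})$, the short exact sequence $0\to \bigl(S/(I:x)\bigr)(-1)\xrightarrow{\cdot x} S/I\to S/(I,x)\to 0$, the two Tor bounds $\reg B\le\max\{\reg A,\reg C\}$ and $\reg A\le\max\{\reg B,\reg C+1\}$, together with the third inequality $\reg(I,x)\le\reg(I)$ which is exactly what upgrades the inequality to the ``moreover'' equality; your case analysis is airtight. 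Two minor points. First, the assertion that adjoining a variable $y$ not occurring in a monomial ideal $J$ preserves regularity deserves one more line than ``equality holds since $y$ is new'': for instance, the minimal free resolution of $S/(J+(y))$ is the tensor product of that of $S/J$ with the Koszul complex $0\to S(-1)\xrightarrow{\cdot y} S\to 0$, so $b_{i,j}(S/(J+(y)))=b_{i,j}(S/J)+b_{i-1,j-1}(S/J)$ and the maximum of $j-i$ is unchanged. Second, the convention you flag for edgeless graphs is genuinely needed: the statement only holds in degenerate cases (e.g.\ $G$ a single edge) if one sets $\reg(I(H))=1$ when $H$ has no edges, equivalently works throughout with $\reg(S/I(H))=0$; this is harmless here, since the paper only applies the lemma (in Proposition~\ref{prop:reg bounded by 3}) in a situation where $G-\{N(x)\cup\{x\}\}$ still contains an edge.
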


Given an edge ideal, the following order was defined on its powers:

\begin{definition}\cite[Discussion~4.1]{banerjee}\label{def:order on the generators of the powers of ideal} Let $I$ be an edge ideal which is minimally generated by the monomials $L_1,\dots ,L_k$. Consider the order $L_1>L_2 >\cdots > L_k$ and let $s\geq 1$. Given two minimal monomial generators $M, N$ of $I^s$  we set $M>N$ if there exists an expression $L_1^{a_1}L_2^{a_2}\dots L_k^{a_k}=M$ such that $(a_1,\dots ,a_k)>_{\lex} (b_1,\dots ,b_k)$ for every expression 
$L_1^{b_2}\dots L_k^{b_k}=N$. 
\end{definition}

The next result follows from Theorem~4.12 in \cite{banerjee}. The last part of the theorem was stated in a different form in \cite[Corollary~5.3]{banerjee}. We state the theorem in a more general form and provide the proof for the last part.

\begin{theorem}\cite{banerjee}\label{thm:ordered colon ideals}
	Let $I$ be an edge ideal which is minimally generated by the monomials $L_1,\dots ,L_{r_1}$. For each $s\geq 1$, let $L_1^{(s)}>L_2^{(s)}>\cdots >L_{r_s}^{(s)}$ be the order on the minimal monomial generators of $I^s$ induced by the order $L_1>\cdots >L_{r_1}$ as described in Definition~\ref{def:order on the generators of the powers of ideal}. Then for all $s\geq 1$ and $1\leq \ell \leq r_s-1$, $$((I^{s+1}, L_1^{(s)},\dots , L_{\ell}^{(s)}):L_{\ell+1}^{(s)})= ( (I^{s+1}:L_{\ell+1}^{(s)}), \text{ some variables}).$$ In particular, if $\reg(I)\leq 4$ and $\reg(((I^{s+1}, L_1^{(s)},\dots , L_{\ell}^{(s)}):L_{\ell+1}^{(s)}))\leq 2$ for every $0\leq \ell \leq r_s-1$ and $s\geq 1$, then $I^t$ has linear resolution for every $t\geq 2$.
\end{theorem}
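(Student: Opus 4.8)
\emph{Proof proposal for the ``in particular'' statement.}

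The plan is to prove the final assertion by induction on the power, converting the linear-resolution property into a regularity estimate. Since $I$ is an edge ideal, $I^t$ is generated in the single degree $2t$, so $I^t$ has a linear resolution exactly when $\reg(I^t)=2t$; the inequality $\reg(I^t)\ge 2t$ holds automatically for any nonzero ideal generated in degree $2t$, so it suffices to prove the upper bound $\reg(I^t)\le 2t$ for every $t\ge 2$. I will deduce this from a single recursion relating $\reg(I^{s+1})$ to $\reg(I^s)$.

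To build the recursion, fix $s\ge 1$ and set
$$ J_\ell=(I^{s+1},L_1^{(s)},\dots ,L_\ell^{(s)}) \quad\text{for } \ell=0,1,\dots ,r_s, $$
so that $J_0=I^{s+1}$, $J_{r_s}=I^s$ (the monomials $L_1^{(s)},\dots ,L_{r_s}^{(s)}$ generate $I^s$ and $I^{s+1}\subseteq I^s$), and $J_\ell=(J_{\ell-1},L_\ell^{(s)})$. Each $L_\ell^{(s)}$ is a product of $s$ quadratic generators of $I$, hence has degree $2s$. Applying Lemma~\ref{lem:well known} with the monomial $m=L_\ell^{(s)}$ yields, for $\ell=1,\dots ,r_s$,
$$ \reg(J_{\ell-1})\le \max\{\reg(J_{\ell-1}:L_\ell^{(s)})+2s,\ \reg(J_\ell)\}. $$
The standing hypothesis gives $\reg(J_{\ell-1}:L_\ell^{(s)})\le 2$ (this is the assumption at index $\ell-1$), so the first argument of the maximum is at most $2s+2$.

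Next I would chain these inequalities. Writing $c=2s+2$, the bound $\reg(J_{\ell-1})\le\max\{c,\reg(J_\ell)\}$ shows that $\ell\mapsto \max\{c,\reg(J_\ell)\}$ is nondecreasing, and therefore
$$ \reg(I^{s+1})=\reg(J_0)\le \max\{2s+2,\ \reg(I^s)\}. $$
Now the induction runs cleanly. For the base case $s=1$ this reads $\reg(I^2)\le\max\{4,\reg(I)\}$, and the hypothesis $\reg(I)\le 4$ forces $\reg(I^2)\le 4$, hence $\reg(I^2)=4$; this is precisely where the constant $4$ enters, since $2s+2=4$ when $s=1$. For the inductive step, from $\reg(I^s)=2s$ one gets $\reg(I^{s+1})\le\max\{2s+2,2s\}=2(s+1)$, and with the automatic lower bound this gives $\reg(I^{s+1})=2(s+1)$. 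Thus $\reg(I^t)=2t$ for all $t\ge 2$, i.e.\ $I^t$ has a linear resolution.

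I expect no serious obstacle in this part: the substantive input is the colon-ideal structure of the first assertion (which follows from Banerjee's Theorem~4.12 and which I take as given) together with the two regularity hypotheses, while the deduction above is bookkeeping with Lemma~\ref{lem:well known}. The only points needing care are the telescoping --- handled by the monotonicity of $\ell\mapsto\max\{2s+2,\reg(J_\ell)\}$ --- and checking that the base case genuinely closes, which is exactly why the hypothesis demands $\reg(I)\le 4$ rather than merely $\reg(I)\le 2$.
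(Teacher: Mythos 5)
Your proposal is correct and follows essentially the same route as the paper: both apply Lemma~\ref{lem:well known} with $m=L_\ell^{(s)}$ to telescope through the ideals $(I^{s+1},L_1^{(s)},\dots,L_\ell^{(s)})$, arrive at the recursion $\reg(I^{s+1})\le\max\{2s+2,\reg(I^s)\}$, and close the induction using $\reg(I)\le 4$ at the base case. Your write-up merely makes explicit two points the paper leaves implicit (the identification $J_{r_s}=I^s$ and the automatic lower bound $\reg(I^t)\ge 2t$), which is fine.
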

\begin{proof}
	The first part of the theorem follows from \cite[Theorem~4.12]{banerjee}. To prove the last part suppose that $\reg(I)\leq 4$ and $\reg(((I^{s+1}, L_1^{(s)},\dots , L_{\ell}^{(s)}):L_{\ell+1}^{(s)}))\leq 2$ for every $0\leq \ell \leq r_s-1$ and $s\geq 1$. Let $t\geq 1$. Using Lemma~\ref{lem:well known} we get
		\begin{equation*}
		\begin{split}
		\reg(I^{t+1}) & \leq \max\{\reg(I^{t+1}:L_1^{(t)})+2t, \, \reg(I^{t+1}, L_1^{(t)})\}\\
		& \leq  \max\{2+2t, \, \reg((I^{t+1}, L_1^{(t)}):L_2^{(t)})+2t, \, \reg((I^{t+1}, L_1^{(t)}, L_2^{(t)}))\}    \\
		& =  \max\{2+2t, \, \reg((I^{t+1}, L_1^{(t)}, L_2^{(t)}))\} \\
		& \quad \vdots \\
		& \leq \max\{2+2t, \reg(I^{t+1},L_1^{(t)},\dots ,L_{r_t}^{(t)})\}\\
		& =\max\{2+2t, \reg(I^t)\}.
		\end{split}
		\end{equation*} 
		Since $\reg(I)\leq 4$, the result follows inductively.
\end{proof}
\begin{theorem}\cite[Lemmas 6.14 and 6.15]{banerjee}\label{thm: associated graph to colon ideal}
		Let $G$ be a gap-free graph with edge ideal $I=I(G)$ and let $e_1,\dots ,e_s$ be some edges of $G$ where $s\geq 1$. Then $(I^{s+1}:e_1\dots e_s)^{\pol}$ is the edge ideal of some gap-free graph $G'$. Also, if $C_n=(u_1,\dots ,u_n)$ is a cycle on $n\geq 5$ vertices such that $C_n^c$ is an induced subgraph of $G'$, then $C_n^c$ is an induced subgraph of $G$ as well.
\end{theorem}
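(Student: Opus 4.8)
The plan is to build everything on Banerjee's description of the colon ideal $(I^{s+1}:M)$, where $M=e_1\cdots e_s$, in terms of \emph{even-connection}. Recall that two vertices $u,v$ (possibly equal) are even-connected with respect to $M$ if there is a walk $u=p_0,p_1,\ldots,p_{2k+1}=v$ in which each edge $\{p_{2i},p_{2i+1}\}$ lies in $G$, each edge $\{p_{2i+1},p_{2i+2}\}$ equals some $e_j$, and the $e_j$ are used with multiplicity at most their multiplicity in $M$. By Banerjee's even-connection theorem, $(I^{s+1}:M)$ is generated in degree $2$, its generators being the edges $uv$ of $G$ together with the monomials $uv$ for even-connected pairs $u,v$; in particular some generators may be squares $u^2$ arising from a vertex even-connected to itself. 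First I would record that polarization replaces each such square $u^2$ by $uu'$ with a fresh variable $u'$ while leaving every squarefree generator untouched. Hence $(I^{s+1}:M)^{\pol}$ is a squarefree quadratic monomial ideal, i.e.\ the edge ideal of a graph $G'$ whose vertex set is $V(G)$ together with one \emph{shadow} $u'$ for each self-even-connected $u$, and whose edges are: the edges of $G$; the even-connection edges $uv$ with $u\neq v$; and the shadow edges $uu'$.

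Two structural observations drive the rest. First, a shadow $u'$ occurs in no generator other than $uu'$, so \textbf{each shadow is a leaf of $G'$}, adjacent only to $u$. Second, I would isolate a \emph{splicing} principle: if $u$ is even-connected to $v$ by a walk whose first ($G$-)edge is $\{u,p_1\}$, and if $w\neq v$ satisfies $\{w,p_1\}\in E(G)$, then replacing the initial vertex $u$ by $w$ yields a walk witnessing that $w$ is even-connected to $v$ (or that $wv\in E(G)$); either way $wv\in E(G')$. The same applies at the terminal edge by reversing the walk. These let me transport even-connections along $G$-edges.

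For gap-freeness, suppose $G'$ contained an induced $2K_2$ with edges $\{a,b\}$ and $\{c,d\}$. Because shadows are leaves, a shadow can occur only as the degree-one endpoint of one of these edges, and such configurations are disposed of immediately using the leaf property; so I may assume $a,b,c,d\in V(G)$. To each of $ab$ and $cd$ I attach a witnessing edge of $G$: the edge itself if it already lies in $G$, otherwise the first $G$-edge of an even-connecting walk (so that $a$, respectively $c$, is one of its endpoints). Applying gap-freeness of $G$ to these two $G$-edges produces an edge of $G$ joining them; chasing this edge through the splicing principle yields a cross edge of $G'$ between $\{a,b\}$ and $\{c,d\}$, contradicting that the $2K_2$ is induced. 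This shows $G'$ is gap-free.

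Finally, for the cycle statement let $C_n^c$ with $n\geq 5$ be induced in $G'$ on vertices $u_1,\ldots,u_n$. Every vertex of $C_n^c$ has degree $n-3\geq 2$ inside the induced subgraph, whereas every shadow has degree $1$ in all of $G'$; hence no shadow occurs and $u_1,\ldots,u_n\in V(G)$. Since $E(G)\subseteq E(G')$ on $V(G)$, the graph $G[\{u_1,\ldots,u_n\}]$ is a subgraph of $C_n^c$ on the same vertex set, so it suffices to prove that every diagonal edge of $C_n^c$ is already an edge of $G$. I would argue by contradiction: if some diagonal $u_au_b$ is a genuine even-connection edge not lying in $G$, then splicing its witnessing $G$-edges against the many $G'$-edges incident to $u_a$ and $u_b$ in $C_n^c$ forces a new edge across a consecutive pair $u_ku_{k+1}$, i.e.\ an edge of $G'$ on a prescribed non-edge of $C_n^c$ — the desired contradiction. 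I expect this last step to be the main obstacle: unlike the gap-free case, one must track the alternating and multiplicity structure of the spliced walks carefully and exploit the rigid non-edge pattern of $C_n^c$ (which is exactly why the hypothesis $n\geq 5$, forcing degree $\geq 2$ and a tightly constrained complement, is essential) in order to certify that the spliced walk is a legitimate even-connection.
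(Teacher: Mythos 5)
The paper itself contains no proof of this theorem: it is quoted, with citation, from Lemmas~6.14 and~6.15 of Banerjee's paper, so your attempt amounts to reconstructing those proofs. Your framework is indeed Banerjee's (even-connection description of the colon ideal, polarization adding leaf ``shadow'' vertices, endpoint splicing), but there is a genuine gap at the decisive step of the gap-freeness argument. Your splicing principle can only replace an \emph{endpoint} of a walk by another $G$-neighbour of its first interior vertex; it cannot join two walks. Suppose both $\{a,b\}$ and $\{c,d\}$ are genuine even-connections, with walks $a,p_1,p_2,\dots,b$ and $c,q_1,q_2,\dots,d$. When you apply gap-freeness of $G$ to the witness edges $\{a,p_1\}$ and $\{c,q_1\}$, the edge produced may be exactly $\{p_1,q_1\}$, and then splicing yields only $q_1b\in E(G')$ and $p_1d\in E(G')$, neither of which is a cross edge of the alleged $2K_2$. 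This case is unavoidable: for $s=1$, $M=e_1=\{w_1,w_2\}$, with walks $a,w_1,w_2,b$ and $c,w_2,w_1,d$, the only $G$-edge guaranteed between $\{a,w_1\}$ and $\{c,w_2\}$ is $e_1$ itself. (A cross edge does exist there, namely $ac$ via the walk $a,w_1,w_2,c$, but it is obtained by using $\{p_1,q_1\}$ as the $e$-edge of a \emph{new} walk, an operation outside your toolkit.) In general one must concatenate the two walks into $b,p_{2k},\dots,p_2,p_1,q_1,q_2,\dots,d$; this alternates correctly, but it uses the $e$-edges of \emph{both} walks, so the multiplicity condition in the definition of even-connection --- the very point you defer --- can fail, and repairing this by cutting the concatenation at a shared $e_i$ is precisely the technical content of Banerjee's Lemma~6.14 that is absent from your argument. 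The same omission undercuts your shadow reduction: an induced $2K_2$ on $\{u,u'\},\{c,d\}$ is \emph{not} ``disposed of immediately using the leaf property''; excluding it requires the self-even-connection walk of $u$ and the same case analysis, including the problematic case above.

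The anticycle half is, by your own admission, a plan rather than a proof, and the plan as stated is not executable: the $G'$-edges of $C_n^c$ incident to $u_a$ and $u_b$ may themselves be even-connections rather than edges of $G$, whereas your splicing principle can only splice a $G$-edge onto an even-connection walk, so ``splicing the witnessing $G$-edges against the many $G'$-edges incident to $u_a$ and $u_b$'' is not a defined operation; and as soon as two even-connection walks must be combined, the multiplicity bookkeeping from the previous paragraph reappears. Since both halves of the theorem stand or fall with exactly this walk surgery, the proposal as written establishes neither.
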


\subsection{Regularity of Powers of Edge Ideals of $(C_4,2K_2)$-free Graphs}
We first bound the regularity of the edge ideal of a $(C_4,2K_2)$-free graph. We use Lemma~\ref{lem: regularity bound with star x} and the description of $(C_4,2K_2)$-free graphs given in Theorem~\ref{thm: gap free C4 free}.

\begin{proposition}\label{prop:reg bounded by 3}
Let $G$ be a $(C_4, 2K_2)$-free graph. Then $\reg(I(G))\leq 3$.
\end{proposition}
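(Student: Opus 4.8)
The plan is to divide into two cases according to the structural description in Theorem~\ref{thm: gap free C4 free}, and to settle the harder case by a single application of Huneke's deletion bound (Lemma~\ref{lem: regularity bound with star x}) at a carefully chosen vertex of the $5$-cycle. Throughout I use the observation, recorded just after Theorem~\ref{thm: gap free C4 free}, that a split graph $H$ has chordal complement, so that Fr\"oberg's Theorem~\ref{thm: frobergs theorem} applies to it.

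First I would dispose of the case $V_3=\emptyset$. Then $G$ is the split graph on $V_1\cup V_2$, so $G^c$ is chordal and Theorem~\ref{thm: frobergs theorem} gives $\reg(I(G))=2$ (the edgeless case being trivial). Now assume $V_3\neq\emptyset$, so by (iii) the set $V_3$ induces a $5$-cycle, which I write as $w_1w_2w_3w_4w_5$. The key choice is to apply Lemma~\ref{lem: regularity bound with star x} at the vertex $x=w_1$. Reading condition (iv)---every vertex of $V_2$ is adjacent to $w_1$ while no vertex of $V_1$ is---together with the cycle adjacencies, one gets $N(w_1)=V_2\cup\{w_2,w_5\}$.

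I would then analyze the two induced subgraphs appearing in the bound. For $G-\{x\}$, deleting $w_1$ turns the $5$-cycle into the path $w_2w_3w_4w_5$, and I claim the result is again split, with independent set $V_1\cup\{w_2,w_5\}$ and clique $V_2\cup\{w_3,w_4\}$: indeed $w_2,w_5$ are nonadjacent (they are precisely the two cycle-neighbors of $w_1$) and have no neighbor in $V_1$ by (iv), while $w_3,w_4$ are adjacent to one another and, again by (iv), to every vertex of $V_2$. Hence $\reg(I(G-\{x\}))=2$. For $G-(N(x)\cup\{x\})$, removing $w_1,w_2,w_5$ and all of $V_2$ leaves the induced graph on $V_1\cup\{w_3,w_4\}$; since $w_3w_4$ is an edge and $V_1$ is independent with no neighbor in $\{w_3,w_4\}$ by (iv), this graph is a single edge together with isolated vertices, so its edge ideal is principal of degree $2$ and $\reg=2$. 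Substituting into Lemma~\ref{lem: regularity bound with star x} gives
\[
\reg(I(G))\leq \max\{\reg(I(G-(N(x)\cup\{x\})))+1,\ \reg(I(G-\{x\}))\}=\max\{2+1,\,2\}=3.
\]

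The only genuinely delicate point is the verification that $G-\{w_1\}$ is split; everything hinges on correctly reading off the adjacencies between $V_1$, $V_2$ and the cycle vertices from condition (iv), and on the elementary fact that deleting one vertex of a $C_5$ produces a $P_4$, which regroups with the existing split structure into a larger independent set and clique. The remaining book-keeping (the case split and the second deletion) is routine.
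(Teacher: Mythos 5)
Your proof is correct, and it follows the same overall route as the paper: both handle $V_3=\emptyset$ via Fr\"oberg's Theorem~\ref{thm: frobergs theorem}, and both settle $V_3\neq\emptyset$ by applying Lemma~\ref{lem: regularity bound with star x} at a vertex $x$ of the $5$-cycle, noting that $G-(N(x)\cup\{x\})$ is a single edge plus isolated vertices. The one genuine difference is the treatment of $G-\{x\}$: the paper argues by induction on the number of vertices, using only that $G-\{x\}$ is again $(C_4,2K_2)$-free, and so gets $\reg(I(G-\{x\}))\leq 3$; you instead observe that deleting a cycle vertex turns the remaining $P_4$ into two nonadjacent vertices (absorbed into $V_1$) and an edge (absorbed into the clique $V_2$), so that $G-\{x\}$ is itself split and $\reg(I(G-\{x\}))=2$ exactly. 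Your version is induction-free and sharper on that term, at the cost of verifying the split structure; the paper's induction avoids that bookkeeping but needs the hereditary property of $(C_4,2K_2)$-freeness. Either way the final bound is $3$, forced by the $+1$ on the other term, and your adjacency verifications (that $N(w_1)=V_2\cup\{w_2,w_5\}$, that $V_1\cup\{w_2,w_5\}$ is independent, and that $V_2\cup\{w_3,w_4\}$ is a clique) all read off correctly from condition (iv) of Theorem~\ref{thm: gap free C4 free}.
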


\begin{proof}
We may assume that $G$ has no isolated vertices as the removal of isolated vertices does not change the edge ideal. We proceed by induction on the number of vertices of $G$. If $G$ is $K_2$, then the result is clear. Let $V=V_1\cup V_2 \cup V_3$ be a partition of the vertices of $G$ as in Theorem \ref{thm: gap free C4 free}. If $V_3=\emptyset$, then $G^c$ is chordal and the result follows from Theorem~\ref{thm: frobergs theorem}. Therefore let us assume that $V_3\neq \emptyset$. Let $x$ be a vertex that belongs to $V_3$. Then both $G-\{x\}$ and $(G-\{x\})^c$ are also gap-free. By induction we have $\reg(I(G-\{x\}))\leq 3$. Also $G-\{N(x)\cup \{x\}\}$ consists of an edge and possibly some isolated vertices. Thus $\reg(I(G-\{N(x)\cup \{x\}\}))=2$ and the proof follows from Lemma~\ref{lem: regularity bound with star x}.
\end{proof}

We are now ready to prove our main result in this section.
\begin{theorem}\label{thm:main thm1}
If $G$ is a $(C_4, 2K_2)$-free graph, then $I(G)^s$ has a linear resolution for all $s\geq 2$.
\end{theorem}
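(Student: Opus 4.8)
The plan is to apply Theorem~\ref{thm:ordered colon ideals} to $I=I(G)$. By Proposition~\ref{prop:reg bounded by 3} we have $\reg(I)\le 3\le 4$, so the hypothesis $\reg(I)\le 4$ is automatic. Hence it suffices to verify the colon condition: for every $s\ge 1$ and every $0\le \ell\le r_s-1$,
$$\reg\left((I^{s+1}, L_1^{(s)},\dots , L_{\ell}^{(s)}):L_{\ell+1}^{(s)}\right)\le 2.$$
Once this is in hand, the last part of Theorem~\ref{thm:ordered colon ideals} delivers that $I^t$ has a linear resolution for all $t\ge 2$.

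To bound these colon ideals I would first invoke the first part of Theorem~\ref{thm:ordered colon ideals}, which writes each of them as $((I^{s+1}:L_{\ell+1}^{(s)}),\ \text{some variables})$. By Lemma~\ref{lem:regularity after adding variables} adjoining variables cannot raise the regularity, so it is enough to control $\reg(I^{s+1}:L_{\ell+1}^{(s)})$. Expressing the generator as a product $L_{\ell+1}^{(s)}=e_1\cdots e_s$ of edges of $G$ and applying Theorem~\ref{thm: associated graph to colon ideal}, the polarization $(I^{s+1}:e_1\cdots e_s)^{\pol}$ is the edge ideal of a gap-free graph $G'$. By Theorem~\ref{thm: polarization} we have $\reg(I^{s+1}:L_{\ell+1}^{(s)})=\reg(I(G'))$, and by Fr\"oberg's Theorem~\ref{thm: frobergs theorem} this equals $2$ exactly when $(G')^c$ is chordal. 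Thus the entire problem reduces to proving that $G'$ is co-chordal, i.e., that $G'$ contains no induced $C_m^c$ for any $m\ge 4$.

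I would settle the easy values of $m$ at once. Since $G'$ is gap-free it has no induced $2K_2=C_4^c$, which handles $m=4$. For $m\ge 6$, the second assertion of Theorem~\ref{thm: associated graph to colon ideal} shows that an induced $C_m^c$ in $G'$ would produce an induced $C_m^c$ in $G$; but $G$ is $(C_4,2K_2)$-free, so by Remark~\ref{rk: no cycle of length greater than 5} it has no induced $C_m^c$ for $m\ge 6$, a contradiction.

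The decisive and genuinely hard case is $m=5$, where $C_5^c=C_5$ is self-complementary and $G$ itself may contain induced $5$-cycles, namely the set $V_3$ of Theorem~\ref{thm: gap free C4 free}; this is the main obstacle, since here Theorem~\ref{thm: associated graph to colon ideal} no longer yields a contradiction on its own. I would argue by contradiction, assuming $G'$ has an induced $C_5$ whose five vertices then induce a $5$-cycle in $G$ as well. Because the induced subgraph on $V_1\cup V_2$ is a split graph and hence chordal, no induced $C_5$ can live inside $V_1\cup V_2$, so the cycle must meet $V_3$. I would then exploit the even-connectedness description behind Theorem~\ref{thm: associated graph to colon ideal} together with properties (iii) and (iv) of Theorem~\ref{thm: gap free C4 free} --- every vertex of $V_3$ is adjacent to all of $V_2$ and to none of $V_1$, and $V_3$ induces a $5$-cycle --- to manufacture a chord among the five vertices, for instance an even-connection of the form $z_i-v-v'-z_j$ through an edge $vv'$ of the clique $V_2$, or one of the form $z_i-z_{i-1}-z_{i-2}-z_j$ running along the cycle $V_3$. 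Such a chord contradicts the cycle being induced in $G'$. The technical heart of the argument, and where I expect the real work to lie, is to carry out this case analysis for \emph{all} admissible products $e_1\cdots e_s$ --- including degenerate ones, such as products supported only on $V_1$--$V_2$ edges, where no direct even-connection chords $V_3$ and one must instead track the variables adjoined to the colon (equivalently, the vertices deleted from $G'$) to break every potential induced $C_5$.
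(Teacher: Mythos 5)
Your overall skeleton matches the paper's: apply Theorem~\ref{thm:ordered colon ideals} together with Proposition~\ref{prop:reg bounded by 3}, dispose of $m=4$ via gap-freeness and of $m\geq 6$ via Theorem~\ref{thm: associated graph to colon ideal} and Remark~\ref{rk: no cycle of length greater than 5}, and isolate $m=5$ as the hard case. But there is a genuine gap, located exactly where you place ``the real work.'' Your central reduction --- discard the adjoined variables and prove $\reg(I^{s+1}:L_{\ell+1}^{(s)})\leq 2$ for \emph{every} minimal generator --- is provably unworkable: the remark following the theorem in the paper exhibits a $(C_4,2K_2)$-free graph and an edge $ab$ supported in $V_1\cup V_2$ with $\reg(I(G)^2:(ab))=3$. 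So for generators built from $V_1$--$V_2$ edges the pure colon bound simply fails, and the adjoined variables are not a convenience but the whole point. Moreover, whether the needed variables actually appear in $((I^{s+1},L_1^{(s)},\dots,L_\ell^{(s)}):L_{\ell+1}^{(s)})$ depends on the \emph{order} of the generators, and you never choose one; for $\ell=0$ there are no adjoined variables at all, so the first generator in the order must already satisfy the pure colon bound. The crucial missing idea is the paper's choice of order: all edges meeting $V_3$ precede the rest. This guarantees, first, that every generator divisible by a vertex of $V_3$ comes before every generator that is not; and second, that for a later generator $M=(vw)N$ with $v\in V_2$, each monomial $(uv)N$ with $u\in V_3$ is an \emph{earlier} generator, which forces every variable of $V_3$ into the colon ideal. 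Modulo $V_3$ the problem then collapses to the split graph $J$ on $V_1\cup V_2$, which is co-chordal, and $\reg(J^{s+1}:M)=2$ follows from the proof of \cite[Theorem~6.16]{banerjee}. Your closing sentence acknowledges that the deleted vertices must ``break every potential induced $C_5$,'' but offers no mechanism by which they are guaranteed to be deleted; the mechanism is the order.

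The other half of the hard case is also incomplete. For generators $M$ that \emph{are} divisible by a vertex of $V_3$ (the ones the order puts first), you propose to rule out an induced $C_5$ in $G'$ by ``manufacturing a chord'' through even-connections, but you do not carry out this case analysis, and it is not clear it closes without further input. The paper instead invokes a ready-made result, \cite[Lemma~3.6]{erey}: if $C_5^c$ is an induced subgraph of the graph $G'$ associated to $(I^{s+1}:M)^{\pol}$, then no vertex of that $C_5$ divides $M$. Since the only induced five-cycle of $G$ sits on $V_3$, this immediately contradicts the divisibility of $M$ by a $V_3$-vertex. In summary, your proposal correctly identifies both the strategy and the difficulty, but the two steps that constitute the actual proof --- the choice of order and the divisibility lemma --- are missing, and the uniform reduction you state in their place is contradicted by the paper's own example.
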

\begin{proof}
Let $V=V_1\cup V_2\cup V_3$ be a partition of the vertices of $G$ as in Theorem~\ref{thm: gap free C4 free}. If $V_3=\emptyset$, then $G$ and $G^c$ are chordal and the result follows from \cite[Theorem~3.2]{herzog hibi zheng} and Theorem~\ref{thm: frobergs theorem}. Let us assume that $V_3\neq\emptyset$.  

Let $A$ be the set of edges of $G$ that contain a vertex of $V_3$. Let $B$ be the set of remaining edges of $G$. Fix a total order on the edges of $G$ such that $a>b$ for any $a\in A$ and $b\in B$. Let $s\geq 1$ be arbitrarily fixed and consider the total order $M_1>\dots >M_r$ on the minimal monomial generators of $I(G)^s$ induced by the order on the edges of $G$ as described in Definition~\ref{def:order on the generators of the powers of ideal}. By Theorem~\ref{thm:ordered colon ideals} and Proposition~\ref{prop:reg bounded by 3} it suffices to show that 
$$\reg(((I(G)^{s+1},M_1,\dots ,M_{\ell}):M_{\ell+1}))\leq 2 \ \text{ for every } 0 \leq \ell \leq r-1.$$

First, we claim that if $M$ is a minimal monomial generator of $I(G)^s$ which is divisible by a vertex of $V_3$, then $\reg((I(G)^{s+1}:M)^{\pol})=2$. To this end, let $G'$ be the gap-free graph with edge ideal $(I(G)^{s+1}:M)^{\pol}$ as in Theorem~\ref{thm: associated graph to colon ideal}. From Theorem~\ref{thm: frobergs theorem}, it suffices to show that $G'$ has no induced $C_n^c$ for $n\geq 5$. Assume for a contradiction $C_n^c$ is an induced subgraph of $G'$ for some $n\geq 5$. Then $C_n^c$ is an induced subgraph of $G$ as well by Theorem~\ref{thm: associated graph to colon ideal}. By Remark~\ref{rk: no cycle of length greater than 5} we must have $n=5$. Notice that the complement of a cycle of length $5$ is again a cycle of length $5$. Observe that the induced subgraph of $G$ on $V_3$ is the only induced cycle of $G$ of length $5$. Therefore $V(C_5^c)=V_3$. Then by \cite[Lemma~3.6]{erey} no vertex of $V_3$ divides $M$, which is a contradiction.

Let $t$ be the largest index such that $M_t$ is divisible by a vertex in $V_3$. Then each of $M_1,\dots , M_t$ is divisible by a vertex in $V_3$ and we have $\reg((I(G)^{s+1}:M_i)^{\pol})=2$ for all $1\leq i \leq t$ by the previously proved claim. Combining Lemma~\ref{lem:regularity after adding variables}, Theorem~\ref{thm: polarization} and Theorem~\ref{thm:ordered colon ideals} we get
$$\reg(((I(G)^{s+1},M_1,\dots ,M_{\ell}):M_{\ell+1}))\leq 2 \ \text{ for every } 0 \leq \ell \leq t-1.$$

Let $0\leq j \leq r-t-1$ be fixed. We will show that 
$$\reg((I(G)^{s+1}, M_1,\dots , M_{t+j}):M_{t+j+1})\leq 2.$$
We claim that 
$$(z: z\in V_3) \subseteq ((I(G)^{s+1}, M_1,\dots , M_{t+j}):M_{t+j+1}). $$
Indeed, let $M_{t+j+1}=(vw)N$ where $v\in V_2$ and $N=1$ for $s=1$ and $N$ is a minimal monomial generator of $I(G)^{s-1}$ for $s\geq 2$. Then for every $u\in V_3$ the monomial $(uv)N$ is a minimal generator of $I(G)^s$ and $(uv) N >M_{t+j+1}$. Thus $(uvN):(M_{t+j+1})=(u)$. Now, using Theorem~\ref{thm:ordered colon ideals} we get
$$(I(G)^{s+1}, M_1,\dots , M_{t+j}):M_{t+j+1})=((J^{s+1}:M_{t+j+1}), \text{ some variables})$$
where $J$ is the edge ideal of the induced subgraph of $G$ on $V_1\cup V_2$. By Theorem~\ref{thm: frobergs theorem}, the ideal $J$ has a linear resolution. From the proof of \cite[Theorem~6.16]{banerjee} and Theorem~\ref{thm: polarization} it is known that $\reg((J^{s+1}:M_{t+j+1}))=2$. Thus the proof follows from Lemma~\ref{lem:regularity after adding variables}.
\end{proof}

\begin{remark}
	In Theorem~\ref{thm:main thm1} the order on the generators of $I(G)$ is crucial. For example, let $V_1=\{a\}, V_2=\{b\}$ and $V_3=\{c, d, e, f, g\}$ with 
	$$I(G)=(ab, bc, bd, be, bf, bg, cd, de, ef, fg, gc).$$ Using Macaulay$2$~\cite{M2} we get $\reg(I(G)^2:(ab))=3$. Therefore an order on the edges which starts with $ab$ does not allow one to apply Theorem~\ref{thm:ordered colon ideals}.
\end{remark}
\section{Powers of Vertex Cover Ideals of $(C_4, 2K_2)$-free graphs}\label{sec: vertex cover ideals}

Let $G$ be a graph on the vertices $x_1,\dots ,x_n$. A set $C$ of vertices of $G$ is called a \emph{vertex cover} if every edge of $G$ contains a vertex from $C$. The vertex cover $C$ is called \emph{minimal} if no proper subset of $C$ is a vertex cover of $G$. The \emph{vertex cover ideal} of $G$, denoted by $I(G)^{\vee}$, is defined as
$$I(G)^{\vee}= (x_{i_1}\dots x_{i_k} : \{x_{i_1}, \dots , x_{i_k}\} \text{ is a minimal vertex cover of } G ). $$
In the next lemma, we describe the minimal vertex covers of $(C_4, 2K_2)$-free graphs.
\begin{lemma}\label{lem:form of mvcs} 
Let $G$ be a $(C_4, 2K_2)$-free graph with the vertex set $V=V_1\cup V_2 \cup V_3$ partitioned as in Theorem~\ref{thm: gap free C4 free} and let $V_3\neq \emptyset$. Then any $A \subseteq V$ is a minimal vertex cover of $G$ if and only if $A$ has one of the following forms:
\begin{itemize}
\item[(i)] $A=V_2\cup \{a,b,c\}$ where $a,b,c\in V_3$, $ab\in G$, $ac\notin G$ and $bc\notin G$,
\item[(ii)] $A=N(a)$ for some $a\in V_2$.
\end{itemize}  
\end{lemma}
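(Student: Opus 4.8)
The plan is to prove both directions by analyzing how a vertex cover must interact with the three parts $V_1$, $V_2$, $V_3$ of the partition given by Theorem~\ref{thm: gap free C4 free}. The starting point is the basic duality: $A$ is a vertex cover of $G$ if and only if $V \setminus A$ is an independent set, and $A$ is \emph{minimal} as a vertex cover precisely when $V \setminus A$ is a \emph{maximal} independent set. So I would reformulate the whole statement in terms of maximal independent sets, which are typically easier to control using property~(iv) of the partition.

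First I would establish a structural constraint that any independent set $W$ must satisfy. Since $V_2$ is a clique, $W$ meets $V_2$ in at most one vertex. Since $V_3$ induces a $5$-cycle $C_5$, the independent sets inside $V_3$ are exactly $\emptyset$, singletons, and the $2$-element sets $\{a,c\}$ with $ac \notin G$ (a $C_5$ has no independent set of size $3$). The crucial interaction comes from~(iv): every vertex of $V_2$ is adjacent to every vertex of $V_3$, and no vertex of $V_1$ is adjacent to any vertex of $V_3$. Consequently, if $W$ contains any vertex of $V_3$, then $W \cap V_2 = \emptyset$, so $W \subseteq V_1 \cup V_3$; conversely, vertices of $V_1$ impose no constraints on the choice inside $V_3$. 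This dichotomy — either $W$ uses $V_3$ (and avoids $V_2$ entirely) or $W$ avoids $V_3$ — is what splits the two cases (i) and (ii).

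Next I would determine which of these independent sets are \emph{maximal}. In the first case, $W \subseteq V_1 \cup V_3$ avoids $V_2$; for $W$ to be maximal it must be as large as possible inside $V_3$, and since a $C_5$ contains independent sets of size at most $2$, maximality forces $|W \cap V_3| = 2$, say $\{a,c\}$ with $ac \notin G$; moreover $W$ must contain every vertex of $V_1$ not already blocked, i.e. $W \supseteq V_1$ (no $V_1$ vertex is adjacent to $a$ or $c$ or to another $V_1$ vertex). Taking complements, $V \setminus W = V_2 \cup (V_3 \setminus \{a,c\})$, and writing $V_3 \setminus \{a,c\} = \{b, b', b''\}$ I would check against the $C_5$ structure that this is exactly a set of the form in~(i). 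In the second case $W \cap V_3 = \emptyset$ but $W$ may still meet $V_2$; maximality then forces $V_3 \subseteq W$ to fail (it doesn't, since $V_3$ is nonempty and $W$ avoids it), so this branch must instead correspond to $W$ containing a single vertex $a \in V_2$ together with everything non-adjacent to it, giving complement $A = N(a)$. The converse direction — showing each listed $A$ is genuinely a minimal vertex cover — is a direct verification that the complement is a maximal independent set.

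The main obstacle I anticipate is the bookkeeping in case~(i): one has to verify that the complement of a maximal independent set of the form $V_1 \cup \{a,c\}$ really produces exactly the combinatorial pattern ``$V_2 \cup \{a,b,c\}$ with $ab \in G$, $ac \notin G$, $bc \notin G$'' stated in the lemma, and that the two non-adjacency conditions together with the single adjacency are forced by (and force) the $C_5$ geometry. Concretely, one must match the three remaining $V_3$-vertices against the cycle and confirm the adjacency signature, being careful that the roles of the three symbols $a,b,c$ in the statement are not the same as the two chosen independent vertices. I expect this to be a short but fiddly case check on the $5$-cycle; the rest should follow cleanly from the clique/independent-set constraints and property~(iv). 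I would also make sure to treat the possibility that a maximal independent set is forced to contain a $V_2$-vertex versus a pair from $V_3$, confirming these are the \emph{only} two types and that no ``mixed'' maximal independent set exists.
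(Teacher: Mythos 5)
Your proposal is correct and is essentially the paper's own proof viewed through complementation: the paper splits on whether the minimal cover $A$ contains all of $V_3$ (yielding form (ii)) or misses some vertex of $V_3$ (yielding form (i)), which is exactly your dichotomy of whether the maximal independent set $V\setminus A$ avoids or meets $V_3$, and the structural facts you invoke (a clique puts all but at most one vertex in any cover, property (iv) forces $V_2\subseteq A$ once a $V_3$-vertex is uncovered, minimal covers of $C_5$ are three vertices with the stated adjacency pattern, and minimality expels $V_1$ from $A$ in case (i)) are precisely the duals of the ones the paper uses. The only cosmetic difference is the language of maximal independent sets versus minimal vertex covers; the case analysis and the bookkeeping on the $5$-cycle are the same.
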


\begin{proof}
One can show that if $A$ satisfies one of the conditions above, then it is a minimal vertex cover. Conversely, suppose that $A$ is a minimal vertex cover. Let $C_5$ be the induced subgraph on $V_3$. We consider cases.

\emph{Case 1:} Suppose that there exists a vertex $u\in V_3$ such that $u\notin A$. Since $u$ is adjacent to every vertex in $V_2$ we must have $V_2\subseteq A$. This implies $A\cap V_1=\emptyset$ and $A\setminus V_2$ is a minimal vertex cover of $C_5$. Then $A$ must be of the form given in $(i)$ since any minimal vertex cover of $C_5$ consists of three vertices $a, b, c$ such that $ab\in C_5, ac\notin C_5$ and $bc \notin C_5$.

\emph{Case 2:} Suppose that $V_3\subseteq A$. Since any minimal vertex cover of $C_5$ contains $3$ vertices, there exists $a\in V_2$ such that $a\notin A$. Observe that any minimal vertex cover of $K_n$ has $n-1$ vertices. Therefore $V_2\setminus \{a\}\subseteq A$. Since $A$ covers $G$, we have $N(a)\cap V_1\subseteq A$. By minimality of $A$ no vertex of $V_1$ except the neighbors of $A$ can belong to $A$. Thus $A=N(a)$ as in $(ii)$.
\end{proof}

\begin{definition}[Linear quotients]
A monomial ideal $I\subset \Bbbk[x_1,\dots ,x_n]$ is said to have \emph{linear quotients} if there is an ordering $m_1,\dots ,m_q$ on the minimal monomial generators of $I$ such that for every $i=2,\dots ,q$ the ideal $(m_1,\dots ,m_{i-1}):m_i$ is generated by a subset of $\{x_1,\dots ,x_n\}$.
\end{definition}

A connected graph is called a \emph{cactus} if each edge belongs to at most one cycle. A \emph{generalized star graph} based on $G_{m,n_1,\dots ,n_k}$ is a special type of chordal graph, see \cite[Definition~1.3]{mohammadi powers of chordal}. Mohammadi \cite{mohammadi cactus, mohammadi powers of chordal} proved the following result regarding the powers of vertex cover ideals of these graphs.

\begin{theorem}\cite[Theorem~3.3]{mohammadi cactus}\cite[Theorem~1.5]{mohammadi powers of chordal}\label{thm:mohammadi}
If $G$ is either a Cohen-Macaulay cactus graph or a generalized star graph based on $G_{m,n_1,\dots ,n_k}$, then all powers of the vertex cover ideal of $G$ are weakly polymatroidal. In particular, they have linear quotients.
\end{theorem}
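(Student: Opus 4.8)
The plan is to verify the weakly polymatroidal property directly, with respect to a carefully chosen order of the variables; since every weakly polymatroidal ideal has linear quotients, the final sentence is automatic once the main claim is established. Recall that, given an order $x_1 > x_2 > \cdots > x_n$, a monomial ideal $I$ is \emph{weakly polymatroidal} if for any two minimal generators $u, v$ with $\deg_{x_1}(u) = \deg_{x_1}(v), \dots, \deg_{x_{t-1}}(u) = \deg_{x_{t-1}}(v)$ and $\deg_{x_t}(u) > \deg_{x_t}(v)$ there is some $j > t$ with $\deg_{x_j}(v) > 0$ and $x_t(v/x_j) \in I$. I would treat both families in parallel, in each case producing (a) an explicit description of the minimal vertex covers, (b) an order of the vertices adapted to the block structure of the graph, and (c) a vertex-exchange argument establishing the condition above for every power.

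For (a) I would exploit that both families are glued from elementary pieces along cut vertices. A generalized star graph based on $G_{m,n_1,\dots,n_k}$ is chordal with a transparent ``center plus arms'' shape, while a cactus is a tree whose blocks are single edges and cycles; in a Cohen--Macaulay cactus the unmixedness hypothesis pins down the admissible cycle lengths and the way each block is attached. In either case a minimal vertex cover restricts to a minimal vertex cover on each block, and these restrictions interact only through the cut vertices. A minimal generator of $(I(G)^{\vee})^s$ is then a product $w_{C_1}\cdots w_{C_s}$ of $s$ minimal vertex covers $C_1,\dots,C_s$, where $w_C$ denotes the squarefree monomial of the cover $C$, so the exponent of $w_{C_1}\cdots w_{C_s}$ at a vertex records how many of the $C_i$ use it.

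For (b) I would order the vertices blockwise, listing the central clique vertices first and proceeding outward toward the leaves, with each chordal piece ordered compatibly with a perfect elimination order. For (c), given generators $u, v$ first differing at $x_t$ with $\deg_{x_t}(u) > \deg_{x_t}(v)$, note that $\deg_{x_t}(v) < \deg_{x_t}(u) \leq s$, so some factor $C_i$ in the chosen expression of $v$ omits $x_t$; the aim is to find a later vertex $x_j \in C_i$ such that $(C_i \setminus \{x_j\}) \cup \{x_t\}$ is again a vertex cover, for then $x_t(v/x_j) \in (I(G)^{\vee})^s$, as required. On the tree-like and clique pieces this swap is straightforward, since deleting a vertex and inserting an adjacent earlier one only affects edges covered locally.

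The main obstacle is the cyclic blocks. On a cycle the minimal vertex covers are not interval-shaped, so replacing one cycle vertex by another need not preserve the covering property, and one must guarantee both that $C_i$ contains a vertex later than $x_t$ and that $x_t$ can be swapped in without uncovering an edge of the cycle. This is exactly where the hypotheses are used: the Cohen--Macaulay (unmixed) condition on a cactus forces each minimal cover to meet a cycle in a prescribed pattern, and the generalized-star hypothesis similarly rigidifies the arms, so that a cycle-by-cycle check makes the exchange always available. Because the chosen order respects the block decomposition, a swap performed inside one block leaves the agreement of $u$ and $v$ on the earlier variables undisturbed; assembling these local exchanges then yields the weakly polymatroidal property for every $s$ and finishes the proof.
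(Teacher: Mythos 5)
First, a point of context: the paper does not prove this statement at all---it is quoted verbatim from Mohammadi's work (\cite[Theorem~3.3]{mohammadi cactus} and \cite[Theorem~1.5]{mohammadi powers of chordal}) and used as a black box in the proof of Theorem~\ref{thm: linear quotients of powers}. So there is no in-paper proof to compare against; your proposal is an attempt to reprove the cited results from scratch, and it has to be judged on its own completeness.

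Judged that way, there is a genuine gap: your proposal is a strategy outline that defers exactly the steps where the theorem could fail. Three things are asserted but never done. (1) You never state the structural characterization of Cohen--Macaulay cactus graphs (which cycle lengths can occur, how blocks may be attached at cut vertices, where whiskers are forced); without this, the claim that ``the unmixedness hypothesis pins down the admissible cycle lengths'' so that ``the exchange is always available'' is not an argument but a restatement of what must be proved. The same applies to the arms of the generalized star graph. (2) The weakly polymatroidal property depends delicately on the chosen variable order---a bad order destroys it even for ideals that are weakly polymatroidal in a good order---yet you specify the order only as ``blockwise, center first, outward toward the leaves,'' which is not enough to run the exchange condition; the cited proofs hinge on a concrete order. (3) The heart of the matter, the cycle-by-cycle exchange check (given minimal generators $u,v$ of $(I(G)^{\vee})^s$ first differing at $x_t$, exhibit a factor $C_i$ of $v$ omitting $x_t$ and a later vertex $x_j\in C_i$ with $(C_i\setminus\{x_j\})\cup\{x_t\}$ still a vertex cover, compatibly with the earlier variables being untouched), is precisely what you acknowledge as ``the main obstacle'' and then dispose of by assertion. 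Note also that on a cycle block the factor $C_i$ that omits $x_t$ may fail to contain any admissible later vertex adjacent to the edges $x_j$ was covering, and ruling this out is where the Cohen--Macaulay hypothesis and the explicit order interact; this is the content of Mohammadi's case analysis, not a routine verification. As it stands, your text is a correct description of the shape of a proof, but not a proof.
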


\begin{lemma}\label{lem:unique expression}
	Let $f_1,\dots ,f_5$ be the minimal vertex covers of $C_5$ and let $s\geq 1$. Then every minimal monomial generator $M$ of $(I(C_5)^{\vee})^s$ has a unique expression of the form $M=f_1^{\alpha_1}\dots f_5^{\alpha_5}$.
\end{lemma}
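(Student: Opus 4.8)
The plan is to reduce the statement to the injectivity of a single $5\times 5$ integer matrix. Label the vertices of $C_5$ cyclically as $x_1,\dots,x_5$. Since a minimal vertex cover of $C_5$ is the complement of a maximal independent set, and the maximal independent sets of $C_5$ are exactly the five pairs $\{x_k,x_{k+2}\}$ of non-adjacent vertices (indices read mod $5$), the five minimal vertex covers are the cubic squarefree monomials $f_k=\prod_{j\neq k,\,k+2}x_j$ for $k=1,\dots,5$. I record the cover--vertex incidence in a matrix $A=(A_{kj})$, where $A_{kj}=1$ if $x_j$ divides $f_k$ and $A_{kj}=0$ otherwise; the cyclic description $f_k=\prod_{j\neq k,k+2}x_j$ makes the entries transparent and handles the only bookkeeping in the argument, namely which covers contain a given vertex.

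The key observation is that for any tuple $(\alpha_1,\dots,\alpha_5)$ of nonnegative integers, the multidegree of $M=f_1^{\alpha_1}\cdots f_5^{\alpha_5}$ is the vector $A^{\mathsf{T}}\alpha$; that is, the exponent of $x_j$ in $M$ is $\sum_k A_{kj}\alpha_k$. Hence two tuples produce the same monomial exactly when their difference lies in the kernel of $A^{\mathsf{T}}$, so the uniqueness assertion is equivalent to the injectivity of $A^{\mathsf{T}}$, i.e.\ to $\det A\neq 0$. I note in passing that this argument would yield uniqueness for every monomial in $(I(C_5)^{\vee})^s$, not merely for the minimal generators.

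The remaining, and essentially only, task is to verify $\det A\neq 0$. I would exploit the circulant structure: writing $J$ for the all-ones matrix and $P$ for the cyclic-shift permutation matrix, one has $A=J-I-P^2$. Its eigenvalue on the all-ones vector is $3$, and its remaining eigenvalues are $-(1+\zeta)$ as $\zeta$ runs over the nontrivial fifth roots of unity; since $\prod_{\zeta^5=1}(1+\zeta)=2$, the four non-principal eigenvalues multiply to $1$, giving $\det A=3\neq 0$. A fully elementary alternative avoids roots of unity: summing the exponent equations shows $\sum_k\alpha_k=s$, and subtracting gives $\alpha_j+\alpha_{j-2}=s-\deg_{x_j}(M)$ for each $j$; because $j\mapsto j-2$ is a single $5$-cycle on $\mathbb{Z}/5\mathbb{Z}$, reindexing turns this into a cyclic system $\beta_t+\beta_{t+1}=c_t$ of odd length, whose coefficient matrix $I+P$ has determinant $2\neq 0$, so each $\alpha_j$ is recovered uniquely from the multidegree of $M$.

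I anticipate no genuine obstacle here: once the incidence matrix is identified, uniqueness is simply the nonvanishing of its determinant, a routine finite computation that the cyclic symmetry makes clean. The one place meriting care is matching the combinatorial indices (which covers omit which vertices) to the matrix, which the chosen labeling resolves.
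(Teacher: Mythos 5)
Your proposal is correct: the reduction to injectivity of the cover--vertex incidence matrix is sound, and both determinant computations check out ($\det A=3$ via the circulant eigenvalues $3$ and $-(1+\zeta)$, and $\det(I+P)=2$ for the odd cyclic system). The paper proves the same underlying fact --- that the multidegree of $M$ determines the exponent vector $(\alpha_1,\dots,\alpha_5)$ --- but by bare-hands elimination rather than a determinant: equating total degrees gives $\sum\alpha_i=\sum\beta_i$, the exponent of $u_2$ gives $\alpha_4+\alpha_5=\beta_4+\beta_5$, the exponent of $u_1$ then isolates $\alpha_1=\beta_1$, and the cyclic symmetry of $C_5$ finishes. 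Your second, ``elementary'' argument (the cyclic system $\beta_t+\beta_{t+1}=c_t$ of odd length) is essentially the paper's elimination repackaged as linear algebra, while your first argument trades that for circulant eigenvalue machinery. What your formulation buys: it makes explicit that uniqueness has nothing to do with $M$ being a \emph{minimal} generator (it holds for any product of the $f_i$), it isolates exactly where the structure of $C_5$ enters (the incidence matrix $J-I-P^2$ is nonsingular, and the auxiliary system $I+P$ is invertible precisely because the cycle length is odd), and it is robust against the bookkeeping errors that ad hoc elimination invites. What the paper's version buys is brevity and self-containedness --- no roots of unity, no reindexing by the $5$-cycle $j\mapsto j-2$ --- plus an appeal to symmetry that replaces four of the five eliminations. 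One small point of care in your write-up: the phrase ``uniqueness is equivalent to injectivity of $A^{\mathsf T}$'' is used over $\mathbb{Z}$, while $\det A\neq 0$ gives injectivity over $\mathbb{Q}$; these coincide here since a rational kernel vector scales to an integer one, but the step deserves the one-line remark.
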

\begin{proof}
Let $I(C_5)=(u_1u_2, u_2u_3, u_3u_4, u_4u_5, u_5u_1)$ and let $f_1=u_1u_2u_4, f_2=u_4u_5u_2, f_3=u_2u_3u_5, f_4=u_3u_4u_1, f_5=u_5u_1u_3$. Suppose $M=f_1^{\beta_1}\dots f_5^{\beta_5}$ is another expression. Since each $f_i$ has the same degree, $\sum_{i=1}^{5}\alpha_i=\sum_{i=1}^{5}\beta_i$. The exponent of $u_2$ in $M$ is $\sum_{i=1}^{3}\alpha_i=\sum_{i=1}^{3}\beta_i$. Therefore $\alpha_4+\alpha_5=\beta_4+\beta_5$. The exponent of $u_1$ in $M$ is $\alpha_1+\alpha_4+\alpha_5=\beta_1+\beta_4+\beta_5$. Thus $\alpha_1=\beta_1$. From the symmetry of the graph it follows that $\alpha_i=\beta_i$ for each $i=1,\dots ,5$.
\end{proof}

\begin{lemma}\label{lem:every s-fold product is minimal}
	Let $G$ be a $(C_4, 2K_2)$-free graph with the vertex set $V=V_1\cup V_2\cup V_3$ partitioned as in Theorem~\ref{thm: gap free C4 free} and let $V_3\neq \emptyset$. Let $M_1,\dots ,M_q$ be the minimal monomial generators of $I(G)^{\vee}$. Then for every $s\geq 1$ and non-negative integers $m_1,\dots, m_q$ with $\sum_{i=1}^qm_i=s$, the monomial $M_1^{m_1}\dots M_q^{m_q}$ is a minimal generator of $(I(G)^{\vee})^s$.
\end{lemma}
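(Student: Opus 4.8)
The plan is to verify minimality directly. A monomial that is an $s$-fold product of generators of $I(G)^{\vee}$ is a minimal generator of $(I(G)^{\vee})^s$ exactly when no $s$-fold product of generators is a proper divisor of it. So I would fix $M=M_1^{m_1}\cdots M_q^{m_q}$, take an arbitrary $s$-fold product $N=M_1^{n_1}\cdots M_q^{n_q}$ (so $\sum_i n_i=s$) with $N\mid M$, and prove $N=M$ as monomials. By Lemma~\ref{lem:form of mvcs} the generators come in two types: the Type~(i) covers $P_1,\dots,P_5$, each being $V_2$ together with one of the five minimal vertex covers of the $5$-cycle on $V_3$; and the Type~(ii) covers $Q_a=N(a)$, one for each $a\in V_2$. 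I would write $M=\prod_j P_j^{p_j}\prod_a Q_a^{q_a}$ and $N=\prod_j P_j^{p_j'}\prod_a Q_a^{q_a'}$, and set $Q=\sum_a q_a$, $Q'=\sum_a q_a'$.

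The crux is to recover the Type~(ii) multiplicities from the exponents on $V_2$. Each $P_j$ contains every vertex of $V_2$, whereas $Q_a$ contains every vertex of $V_2$ except $a$. Hence for each $a\in V_2$ the exponent of $a$ in $M$ equals $\bigl(\sum_j p_j+Q\bigr)-q_a=s-q_a$, and similarly $\deg_a(N)=s-q_a'$. Since $N\mid M$ gives $\deg_a(N)\le\deg_a(M)$, this yields $q_a'\ge q_a$ for every $a\in V_2$, and summing over $a$ gives $Q'\ge Q$.

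For the reverse inequality I would restrict to $V_3$. Each $P_j$ contributes a degree-$3$ minimal cover of the $5$-cycle there, while each $Q_a$ contributes all five vertices of $V_3$; therefore the total $V_3$-degree of $M$ is $3(s-Q)+5Q=3s+2Q$, and likewise that of $N$ is $3s+2Q'$. As $N\mid M$ forces $N|_{V_3}\mid M|_{V_3}$, comparing degrees gives $Q'\le Q$. Combining the two bounds yields $Q'=Q$, whence $q_a'\ge q_a$ with equal sums forces $q_a'=q_a$ for all $a$; moreover $N|_{V_3}$ and $M|_{V_3}$ now have equal degree with $N|_{V_3}\mid M|_{V_3}$, so $N|_{V_3}=M|_{V_3}$. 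Finally, only Type~(ii) covers meet $V_1$, so the $V_1$-exponents of $M$ and of $N$ depend solely on the numbers $q_a$ and $q_a'$ respectively; since these agree, the $V_1$-parts coincide as well. Putting the three pieces together gives $N=M$, so $M$ is a minimal generator. (Alternatively, one can upgrade $N|_{V_3}=M|_{V_3}$ to $p_j'=p_j$ using Lemma~\ref{lem:unique expression}, noting that all five covers of $C_5$ have degree $3$, so every product of them is automatically a minimal generator of a power of $I(C_5)^{\vee}$; this is not needed for minimality but records the uniqueness of the expression.)

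I expect the main obstacle to be isolating the two opposing degree inequalities rather than any single computation. The decisive structural fact is that replacing a Type~(i) factor by a Type~(ii) factor simultaneously \emph{lowers} the total $V_2$-degree (a Type~(ii) cover omits one clique vertex) and \emph{raises} the total $V_3$-degree (it picks up all five cycle vertices instead of three); it is precisely the clash of these two monotonicities that pins down $Q'=Q$. Once the total number of Type~(ii) factors is fixed, matching the remaining exponents on $V_1$, $V_2$ and $V_3$ is straightforward bookkeeping.
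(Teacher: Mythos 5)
Your proof is correct and takes essentially the same approach as the paper's: both arguments recover the Type~(ii) multiplicities from the $V_2$-exponents (the exponent of $a\in V_2$ being $s-q_a$, giving $q_a'\geq q_a$) and play this against the degree of the restriction to $V_3$ (your $3s+2Q$ is the paper's $5s-2\sum\alpha_i$) to pin down all multiplicities, finishing with equal degree plus divisibility. The only difference is presentational: the paper runs the clash of the two monotonicities as a proof by contradiction, whereas you state it as two opposing inequalities forcing $Q'=Q$.
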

\begin{proof}
	If $V_2=\emptyset$, then the result is clear since $(I(G)^{\vee})^s$ is generated in degree $3s$. So, let us assume that $V_2=\{z_1,\dots ,z_k\}$ for some $k\geq 1$. Let $I(C_5)^{\vee}=(f_1,\dots ,f_5)$. Using Lemma~\ref{lem:form of mvcs} let 
	$$G=\prod_{i=1}^{5}(V_2f_i)^{\alpha_i}\prod_{j=1}^{k}N(z_j)^{\beta_j}$$
be a generator of $(I(G)^{\vee})^s$	for some $\alpha_i,\beta_j\geq 0$ with $\sum_{i=1}^{5}\alpha_i+\sum_{j=1}^{k}\beta_j=s$. We claim that $G$ is minimal. Let
	$$H=\prod_{i=1}^{5}(V_2f_i)^{\gamma_i}\prod_{j=1}^{k}N(z_j)^{\kappa_j} $$
	be a minimal generator of $(I(G)^{\vee})^s$ such that $H$ divides $G$. Observe that for every $j=1,\dots ,k$ the exponents of $z_j$ in $G$ and $H$ are respectively $s-\beta_j$ and $s-\kappa_j$. Since $H$ divides $G$ we get $\kappa_j\geq \beta_j$ for all $j=1,\dots,k$. Assume for a contradiction that $\kappa_{j_0}>\beta_{j_0}$ for some $j_0$. Then we get $\sum_{i=1}^{5}\alpha_i>\sum_{i=1}^{5}\gamma_i$. Let $u=\gcd(H, V_3^s)$ and $v=\gcd(G,V_3^s)$ so that $u$ divides $v$. Observe that $u$ has degree
	$$3\sum_{i=1}^{5}\gamma_i +5\sum_{j=1}^{k}\kappa_j =3\sum_{i=1}^{5}\gamma_i+5(s-\sum_{i=1}^{5}\gamma_i)=5s-2\sum_{i=1}^{5}\gamma_i.$$
	Similarly, $v$ has degree $5s-2\sum_{i=1}^{5}\alpha_i$. Thus degree of $u$ is greater than degree of $v$, which is a contradiction. Now, we have $\kappa_j=\beta_j$ for all $j=1,\dots ,k$. This implies $\sum_{i=1}^{5}\alpha_i=\sum_{i=1}^{5}\gamma_i$. Then $\prod_{i=1}^{5}f_i^{\alpha_i}$ and $\prod_{i=1}^{5}f_i^{\gamma_i}$ are minimal generators of $(I(C_5)^{\vee})^A$ where $A=\sum_{i=1}^{5}\alpha_i$. Since $H$ divides $G$ we get $\prod_{i=1}^{5}f_i^{\alpha_i}=\prod_{i=1}^{5}f_i^{\gamma_i}$. Thus $G=H$ and the minimality of $G$ is established.
\end{proof}
\begin{lemma}\label{lem: split is generalized star}
	If $G$ is a connected split graph with at least one edge, then $G$ is a generalized star graph based on $G_{m,n_1}$ for some $m,n_1$.
\end{lemma}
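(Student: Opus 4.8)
The plan is to produce an explicit split partition of $G$ and to verify that it exhibits $G$ as a generalized star graph based on $G_{m,n_1}$ in the sense of \cite[Definition~1.3]{mohammadi powers of chordal}. First I would write $V(G)=K\cup I$, where $K$ induces a complete subgraph and $I$ is an independent set; such a partition exists because $G$ is a split graph. Among all such partitions I would choose one for which $K$ is a maximal clique of $G$, so that no vertex of $I$ is adjacent to every vertex of $K$ (otherwise $K$ together with that vertex would be a larger clique). Setting $m=|K|$ and $n_1=|I|$, the case $I=\emptyset$ gives $G=K_m$, the degenerate generalized star, so I may focus on $n_1\geq 1$; the case $m=1$ is just a star $K_{1,n_1}$.

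Next I would record how the vertices of $I$ attach to $K$. Since $G$ has an edge and $I$ is independent, every edge has an endpoint in $K$, so $K\neq\emptyset$; and since $G$ is connected while each vertex $v\in I$ has all of its neighbours in $K$, the set $N(v)\cap K=N(v)$ is nonempty. With this data in place I would take the central complete graph of the generalized star to be $K$ and the single family of $n_1$ outer vertices to be $I$, each outer vertex $v$ being joined to the nonempty set $N(v)\cap K$. The remaining work is then to check, condition by condition, that the data $(K,I)$ together with these attachments meets the defining requirements of a generalized star graph with one branch family; for a connected split graph this reduces to the three observations already secured, namely that $K$ is a clique, that $I$ is independent, and that every outer vertex sends at least one edge into $K$.

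The step I expect to be the main obstacle is reconciling Definition~1.3 with the non-uniqueness of the split decomposition: a split graph can admit several clique/independent-set partitions, whereas the defining structure of $G_{m,n_1}$ must be matched against one chosen partition. I would resolve this precisely through the maximal-clique choice made at the outset, since maximality of $K$ rules out the ambiguous case of a universal outer vertex that could equally well be absorbed into the centre, and thereby pins down the role of each vertex of $I$. Once $G$ has been identified with a generalized star graph based on $G_{m,n_1}$, the lemma follows; in the intended application this identification feeds directly into Theorem~\ref{thm:mohammadi} to control the powers of the vertex cover ideal of the split part of a $(C_4,2K_2)$-free graph.
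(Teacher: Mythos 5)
Your proposal misreads the definition it needs to verify, and this is a genuine gap rather than a cosmetic one. In \cite[Definition~1.3]{mohammadi powers of chordal} the subscripts of $G_{m,n_1}$ do \emph{not} record the clique size and the number of independent vertices: both $m$ and $n_1$ count vertices of the clique. A generalized star graph based on $G_{m,n_1}$ is a clique partitioned as $\{y_1,\dots ,y_m\}\cup\{x_{1,1},\dots ,x_{1,n_1}\}$ together with added independent vertices $u_1$ and $u_{1,1},\dots ,u_{1,m_1}$ subject to three conditions: $N(u_1)\subseteq\{y_1,\dots ,y_m\}$; each $N(u_{1,j})\subseteq\{x_{1,1},\dots ,x_{1,n_1}\}\cup\{y_1,\dots ,y_m\}$; and every $y_\ell$ is adjacent to $u_1$ (so that in fact $N(u_1)=\{y_1,\dots ,y_m\}$). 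These are exactly the conditions (i)--(iii) the paper checks. Your identification ``$m=|K|$, $n_1=|I|$, the outer family is $I$'' does not match this: under it, all clique vertices would be $y$'s, and the third condition would demand an independent vertex adjacent to \emph{all} of $K$ --- which is precisely what your own maximality assumption on $K$ rules out. So with your labeling the definition is violated, not verified; and the three properties you propose to check instead (clique, independent set, nonempty attachments) hold for every connected split graph, which would make the lemma a tautology --- it is not.

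The missing idea, which is the entire content of the paper's short proof, is to let a single chosen independent vertex dictate the structure: fix $u_1\in V_1$, set $\{y_1,\dots ,y_m\}=N(u_1)$ and $\{x_{1,1},\dots ,x_{1,n_1}\}=V_2\setminus N(u_1)$, and let the remaining vertices of $V_1$ play the role of the $u_{1,j}$'s. Then condition (i) and condition (iii) hold by construction, condition (ii) holds because $G$ is split (every vertex of $V_1$ has all its neighbours in $V_2$), and maximality of the clique $V_2$ is used exactly to guarantee $V_2\setminus N(u_1)\neq\emptyset$, i.e., $n_1\geq 1$, so the parameters are legitimate. (You do invoke maximality, but only to address a non-uniqueness of the split partition, which is not where the difficulty lies; the paper's case $V_1=\emptyset$, where $G=G_{1,|V_2|-1}$, also shows the parameters attach to a partition of the clique rather than to $|K|$ and $|I|$.) Your outline could be repaired by inserting this choice of $u_1$ and redoing the verification against the actual conditions of Definition~1.3, but as written it does not prove the lemma.
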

\begin{proof}
	Suppose that $G$ is a split graph with the vertex set $V=V_1\cup V_2$ where $V_1$ is independent and $V_2$ is a clique. We may assume that $V_2$ is a maximal clique of $G$. If $V_1=\emptyset$, then $G=G_{1,|V_2|-1}$. Let $u_1\in V_1$ and let $N(u_1)=\{y_1,\dots ,y_m\}$. Since $V_2$ is a maximal clique we have $V_2\setminus N(u_1)\neq \emptyset$. Let $V_2\setminus N(u_1)=\{x_{1,1}, \dots , x_{1,n_1}\}$ and $V_1\setminus \{u_1\}=\{u_{1,1},\dots , u_{1,m_1}\}$. Now we have
	\begin{itemize}
	\item[(i)]  $N(u_1)\subseteq \{y_1,\dots ,y_m\}$
	\item[(ii)]  $\cup_{j=1}^{m_1} N(u_{1,j})\subseteq \{x_{1,1},\dots ,x_{1,n_1}\}\cup \{y_1,\dots ,y_m\}$
	\item[(iii)]  $N(y_{\ell})\cap \{u_1\}\neq \emptyset$ for all $\ell=1,\dots ,m$.
	\end{itemize}
	which shows that $G$ is a generalized star graph based on $G_{m,n_1}$. 
\end{proof}
We now prove our main result in this section.
\begin{theorem}\label{thm: linear quotients of powers}
	Let $G$ be a $(C_4,2K_2)$-free graph with at least one edge. Then $(I(G)^{\vee})^s$ has linear quotients for every $s\geq 1$.
\end{theorem}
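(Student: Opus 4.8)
The plan is to use the structural partition $V=V_1\cup V_2\cup V_3$ from Theorem~\ref{thm: gap free C4 free} and split into the cases $V_3=\emptyset$ and $V_3\neq\emptyset$. After discarding isolated vertices (these lie in $V_1$ and never occur in a minimal vertex cover, so they do not affect $I(G)^\vee$), the case $V_3=\emptyset$ is immediate: then $G$ is a connected split graph with an edge, so by Lemma~\ref{lem: split is generalized star} it is a generalized star graph, and Theorem~\ref{thm:mohammadi} gives that every power of $I(G)^\vee$ is weakly polymatroidal, hence has linear quotients. All the work is in the case $V_3\neq\emptyset$.

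Write $V_2=\{z_1,\dots,z_k\}$, let $f_1,\dots,f_5$ be the cover monomials of the $5$-cycle on $V_3$, and recall from Lemma~\ref{lem:form of mvcs} that the generators of $I(G)^\vee$ are the $V_2 f_i$ together with the $N(z_j)$. By Lemmas~\ref{lem:unique expression} and \ref{lem:every s-fold product is minimal}, the minimal generators of $(I(G)^\vee)^s$ are exactly the distinct monomials
\[ m_{\alpha,\beta}=\prod_{i=1}^5 (V_2 f_i)^{\alpha_i}\prod_{j=1}^k N(z_j)^{\beta_j}, \qquad \textstyle\sum_i\alpha_i+\sum_j\beta_j=s. \]
Two features drive the argument. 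First, each generator factors as a monomial in the $V_3$-variables (the $f_i$ and the copies of $V_3$ sitting inside each $N(z_j)$) times a monomial in the $V_1\cup V_2$-variables, and the two blocks interact only through $A:=\sum_i\alpha_i$. Second, a direct count gives the clean identity $\deg_{z_j} m_{\alpha,\beta}=s-\beta_j$ for every $j$, since $z_j$ occurs once in each of the $A$ factors $V_2f_i$ and once in each factor $N(z_{j'})$ with $j'\neq j$.

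I would then order the generators by three keys: the primary key is $A$ \emph{decreasing} (the generators using more clique-covers $V_2 f_i$ come first); the secondary key orders the $\beta$-vectors arbitrarily; the tertiary key, for fixed $A$ and fixed $\beta$, refines the $\alpha$-part by a linear-quotients order of $(I(C_5)^\vee)^A$, which exists because $C_5$ is a Cohen--Macaulay cactus and Theorem~\ref{thm:mohammadi} applies. To check linear quotients, fix $m'=m_{\alpha',\beta'}$ and an earlier generator $m^\ast=m_{\alpha^\ast,\beta^\ast}$, so $A^\ast\geq A'$. If $\beta^\ast_j<\beta'_j$ for some $j$, then the exponent identity shows $z_j\mid (m^\ast:m')$, and $z_j$ itself is a colon of $m'$ by an \emph{earlier} generator: replacing one factor $N(z_j)$ of $m'$ by $V_2 f_\ell$ yields a stratum-$(A'+1)$ generator $\widetilde m$ with $\widetilde m:m'=(z_j)$, because the $V_3$ inside $N(z_j)$ absorbs the $f_\ell$-factors (as $f_\ell\mid V_3$) and only the variable $z_j$ survives in the colon. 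Otherwise $\beta^\ast_j\geq\beta'_j$ for all $j$; since $A^\ast\geq A'$ forces $\sum\beta^\ast\leq\sum\beta'$, this gives $\beta^\ast=\beta'$, so $m^\ast:m'$ lies purely in the $V_3$-variables and is a colon inside $(I(C_5)^\vee)^{A'}$, hence variable-generated by the tertiary order. In both cases $m^\ast:m'$ is divisible by a variable realized as a colon of $m'$ by an earlier generator, which is the linear-quotients criterion.

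The main obstacle I anticipate is that the $V_1\cup V_2$-part is \emph{not} linear-quotients on its own: the ideal generated by the restrictions of the $N(z_j)$ to $V_1\cup V_2$ can fail the property already for two generators $z_2v_1,\,z_1v_2$ (whose mutual colon is not a variable), so one cannot simply peel off a split-graph factor and multiply. What rescues the argument is that the clique-cover generators $V_2 f_i$ from the \emph{higher} strata supply exactly the missing variables $z_j$; making this precise is the role of the formula $\deg_{z_j}m_{\alpha,\beta}=s-\beta_j$ together with the single-variable colon computation $\widetilde m:m'=(z_j)$, and arranging the order so that both the cross-stratum witnesses and the within-stratum $C_5$-witnesses come \emph{before} $m'$ is the delicate bookkeeping.
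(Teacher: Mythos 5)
Your proposal is correct and follows essentially the same route as the paper: the same case split via Theorem~\ref{thm: gap free C4 free}, the same description of generators as products $\prod_i(V_2f_i)^{\alpha_i}\prod_j N(z_j)^{\beta_j}$ with the exponent identity $\deg_{z_j}=s-\beta_j$, the same stratification by $A=\sum_i\alpha_i$ with cross-stratum witnesses $\widetilde m=m'V_2f_\ell/N(z_j)$ giving colon $(z_j)$, and the same within-stratum reduction to a linear quotients order on $(I(C_5)^{\vee})^A$ via Theorem~\ref{thm:mohammadi}. The only difference is the tie-breaking inside a stratum (you order by $\beta$ then $\alpha$, the paper by the $C_5$-order on $\alpha$ then lex on $\beta$), which is immaterial since both case analyses reduce to the same two situations.
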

\begin{proof}
	We may assume that $G$ has no isolated vertices. Suppose that the vertex set $V=V_1\cup V_2 \cup V_3$ of $G$ is partitioned as in ~Theorem~\ref{thm: gap free C4 free}. If $V_2=\emptyset$, then $I(G)=I(C_5)$. Since $C_5$ is a connected cactus graph and it is Cohen-Macaulay by \cite[Proposition~4.1]{francisco van tuyl} it follows from Theorem~\ref{thm:mohammadi} that every power of $I(C_5)^{\vee}$ has linear quotients. If $V_3=\emptyset$, then the proof follows from Lemma~\ref{lem: split is generalized star} and Theorem~\ref{thm:mohammadi}. Therefore let us assume that both $V_2$ and $V_3$ are non-empty. Let $V_2=\{z_1,\dots, z_k\}$ and let $I(C_5)^{\vee}=(f_1, \dots , f_5)$. We claim that every minimal generator $M$ of $(I(G)^{\vee})^s$ has a unique expression of the form $M=\prod_{i=1}^{5}(V_2f_i)^{\alpha_i}\prod_{j=1}^{k}N(z_j)^{\beta_j}$. Note that $M$ has such expression by Lemma~\ref{lem:form of mvcs}. Suppose that $M=\prod_{i=1}^{5}(V_2f_i)^{\gamma_i}\prod_{j=1}^{k}N(z_j)^{\kappa_j}$. For each $j\in\{1,\dots ,k\}$ the exponent of $z_j$ in $M$ is $s-\beta_j=s-\kappa_j$ and thus $\beta_j=\kappa_j$. Then $\sum_{i=1}^{5}\alpha_i=\sum_{i=1}^{5}\gamma_i$ and from Lemma~\ref{lem:unique expression} it follows that $\alpha_i=\gamma_i$ for each $i\in\{1,\dots, 5\}$.
	
	Let $L_1^p<L_2^p < \dots < L_{r_p}^p$ be a linear quotients order on the minimal monomial generators of $(I(C_5)^{\vee})^p$ for each $p$. Consider a total order $M_1,\dots ,M_q$ on the minimal monomial generators of $(I(G)^{\vee})^s$ such that for any
	\begin{equation}\label{eq:generators product form}
	M_t=\prod_{i=1}^{5}(V_2f_i)^{\alpha_i}\prod_{j=1}^{k}N(z_j)^{\beta_j} \quad \text{and} \quad 	M_{\ell}=\prod_{i=1}^{5}(V_2f_i)^{\gamma_i}\prod_{j=1}^{k}N(z_j)^{\kappa_j} 
	\end{equation}
	$M_t$ precedes $M_{\ell}$ in the order (i.e., $t<\ell$) if one of the following holds:
	\begin{itemize}
		\item[(i)] $\displaystyle\sum_{i=1}^{5}\alpha_i > \sum_{i=1}^{5}\gamma_i $
		\item[(ii)] $(\alpha_1,\dots,\alpha_5)=(\gamma_1,\dots ,\gamma_5)$ and $(\beta_1,\dots , \beta_k)>_{\lex}(\kappa_1,\dots ,\kappa_k)$
		\item[(iii)] $\displaystyle A:=\sum_{i=1}^{5}\alpha_i = \sum_{i=1}^{5}\gamma_i $ and $\displaystyle \prod_{i=1}^{5}f_i^{\alpha_i} < \displaystyle \prod_{i=1}^{5}f_i^{\gamma_i}$ in the linear quotients order of $(I(C_5)^{\vee})^A$.
	\end{itemize}
	Let $2\leq \ell \leq q$ be as in Eq~\eqref{eq:generators product form}. We will show that $(M_1,\dots ,M_{\ell-1}):M_{\ell}$ is generated by variables. If $\kappa_j=0$ for all $j=1,\dots ,k$, then for all $t<\ell$ with $M_t$ as in Eq.~\eqref{eq:generators product form} the condition $(iii)$ is satisfied. In this case, the colon ideal
	$$(M_1,\dots ,M_{\ell-1}):M_{\ell} = (L_1^s, \dots , L_{\ell-1}^s):L_{\ell}^s  $$
	is generated by some variables. So, let us assume that $\kappa_j\neq 0$ for at least one $j$. We claim that $z_j$ is a generator of $(M_1,\dots ,M_{\ell-1}):M_{\ell}$ for every $\kappa_j\neq 0$. Indeed, if $\kappa_j\neq 0$, then by Lemma~\ref{lem:every s-fold product is minimal} $M_r=(M_{\ell}V_2f_5)/N(z_j)$ is a minimal monomial generator for some $r<\ell$. Observe that $M_r/\gcd(M_r,M_{\ell})=z_j$ which shows that $z_j$ is a generator of the colon ideal.
	
	 Let $t<\ell$ and let $M_t$ be as in Eq.~\eqref{eq:generators product form}. Observe that if $\beta_j<\kappa_j$ for some $j$, then $z_j$ divides $M_t/\gcd(M_t,M_{\ell})$. If $\beta_j\geq \kappa_j$ for all $j$, then we have $(\beta_1,\dots ,\beta_k)=(\kappa_1,\dots ,\kappa_k)$ and the condition $(iii)$ holds. Thus using Lemma~\ref{lem:every s-fold product is minimal} we get
	$$(M_1,\dots ,M_{\ell-1}):M_{\ell}=(z_j : \kappa_j\neq 0) + (\text{possibly some vertices of} \ V_3)  $$
	as desired.
		\end{proof}
	
If $v$ is a vertex of a graph $G$, then the \emph{degree} of $v$ is the number of neighbors of $v$. The \emph{maximum degree} of $G$, denoted by $\Delta(G)$, is the maximum degree of its vertices.
\begin{cor}
	If $G$ is a $(C_4,2K_2)$-free graph, then for each $s\geq 1$
	\[\reg((I(G)^{\vee})^s) = \begin{cases} 
	3s & \text{if }  G \text{ is a } C_5 \text { with isolated vertices} \\
	s\Delta(G) & \text{otherwise.} 
	\end{cases}
	\]	
\end{cor}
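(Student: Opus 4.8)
The plan is to leverage Theorem~\ref{thm: linear quotients of powers}: since $(I(G)^{\vee})^s$ has linear quotients it is componentwise linear, and for a componentwise linear ideal the regularity equals the largest degree of a minimal monomial generator (see \cite{herzog hibi monomial ideals}, and the discussion in the introduction). So the statement reduces entirely to computing the maximal degree $D_s$ of a minimal generator of $(I(G)^{\vee})^s$. Because every minimal generator of a power $I^s$ of a monomial ideal is a product of $s$ minimal generators of $I$, one always has $D_s \le s\,D_1$, where $D_1$ is the maximum cardinality of a minimal vertex cover of $G$; the two things left to prove are the value of $D_1$ and that the bound $D_s = s\,D_1$ is achieved.

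First I would remove isolated vertices: these lie in no minimal vertex cover, so they affect neither $I(G)^{\vee}$ nor $\Delta(G)$ once $G$ has an edge, and they turn ``$C_5$ with isolated vertices'' into $C_5$. With $V=V_1\cup V_2\cup V_3$ as in Theorem~\ref{thm: gap free C4 free}, I would then treat three cases. If $V_2=\emptyset$ and $V_3\neq\emptyset$, then $G=C_5$, all of whose minimal vertex covers have three vertices; hence $(I(C_5)^{\vee})^s$ is generated in the single degree $3s$ and $D_s=3s$, which is the first branch. If $V_2,V_3\neq\emptyset$, Lemma~\ref{lem:form of mvcs} lists the minimal vertex covers as $V_2\cup\{a,b,c\}$, of size $|V_2|+3$, and $N(a)$ for $a\in V_2$; using Theorem~\ref{thm: gap free C4 free}(iv) I would compute $\deg a = (|V_2|-1)+5+|N(a)\cap V_1| \ge |V_2|+4 > |V_2|+3$ for $a\in V_2$, while vertices of $V_1$ have degree at most $|V_2|$ and vertices of $V_3$ have degree $|V_2|+2$; hence $\Delta(G)=\max_{a\in V_2}\deg a = D_1$. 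If $V_3=\emptyset$, then $G$ is a split graph; taking $V_2$ to be a maximal clique I would classify the minimal vertex covers directly as the sets $N(w)$, $w\in V_2$ (together with $V_2$ itself when every vertex of $V_2$ has a neighbour in $V_1$), and check that their maximum size is again $\Delta(G)$, attained at a clique vertex.

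It remains to produce a minimal generator of degree $s\,D_1$. When $V_3\neq\emptyset$ this is immediate from Lemma~\ref{lem:every s-fold product is minimal}: the $s$-fold product of a maximum-size minimal vertex cover is a minimal generator of $(I(G)^{\vee})^s$. In the split case, where no such lemma is available, I would argue directly that $N(w^{\ast})^s$ is a minimal generator, where $w^{\ast}\in V_2$ realizes $\Delta(G)$. Indeed $w^{\ast}\notin N(w^{\ast})$, so any product of $s$ minimal vertex covers dividing $N(w^{\ast})^s$ must avoid $w^{\ast}$; since $w^{\ast}$ is adjacent to every other vertex of the clique $V_2$, the only minimal vertex cover not containing $w^{\ast}$ is $N(w^{\ast})$ itself, forcing the product to equal $N(w^{\ast})^s$.

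I expect the split-graph case to be the main obstacle. Unlike the case $V_3\neq\emptyset$, where Lemma~\ref{lem:every s-fold product is minimal} handles minimality wholesale, here both the identity $D_1=\Delta(G)$ and the minimality of $N(w^{\ast})^s$ rely on the explicit list of minimal vertex covers and on careful bookkeeping of which vertices of the clique have neighbours in the independent set; this is elementary but is where the argument must be made with care.
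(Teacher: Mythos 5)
Your proposal is correct and follows essentially the same route as the paper: Theorem~\ref{thm: linear quotients of powers} gives linear quotients, hence componentwise linearity, so the regularity equals the top degree of a minimal generator of $(I(G)^{\vee})^s$, which is then identified as $3s$ (for $C_5$) or $s\Delta(G)$ by the same case analysis on $V_2$ and $V_3$ using Lemma~\ref{lem:form of mvcs} and the classification of minimal vertex covers of split graphs. The one difference is that you explicitly verify that the degree $s\Delta(G)$ is actually attained by a \emph{minimal} generator of the $s$-th power (via Lemma~\ref{lem:every s-fold product is minimal} when $V_3\neq\emptyset$, and via the observation that $N(w^{\ast})^s$ can only be divided by products avoiding $w^{\ast}$ in the split case) --- a point the paper's proof leaves implicit, so this extra care is a genuine improvement rather than a deviation.
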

	\begin{proof}
	It is known \cite[Theorem~8.2.15]{herzog hibi monomial ideals} that if an ideal has linear quotients, then it is componentwise linear. Therefore by \cite[Corollary~8.2.14]{herzog hibi monomial ideals} for each $s\geq 1$, $\reg((I(G)^{\vee})^s)$ is equal to the highest degree of a generator in a minimal set of generators of $(I(G)^{\vee})^s$.  Let $V=V_1\cup V_2\cup V_3$ be partitioned as in Theorem~\ref{thm: gap free C4 free}. Let us assume that $V_2\neq \emptyset$ since otherwise the result is already known. 
	
\emph{Case 1:} Suppose that $V_3\neq \emptyset$. Then by Lemma~\ref{lem:form of mvcs}, for any minimal vertex cover $A$ of $G$ and for all $v_i\in V_i$ we have
$$|N(v_1)|< |N(v_3)| < |A| \leq |N(v_2)| $$ 
which shows that maximum cardinality of a minimal vertex cover of $G$ is $\Delta(G)$.

\emph{Case 2:} Suppose that $V_3=\emptyset$. First note that $N(a)$ is a minimal vertex cover of $G$ for every $a\in V_2$. Also, notice that $V_2$ is a minimal vertex cover of $G$ if and only if $N(a)\cap V_1\neq \emptyset$ for every $a\in V_2$. If $A$ is a minimal vertex cover of $G$, then either $A=V_2$ or $A=N(a)$ for some $a\in V_2$. Therefore the maximum cardinality of a minimal vertex cover of $G$ is $\Delta(G)$, which completes the proof.
	\end{proof}

\end{document}